\theoremstyle{plain}
\newtheorem{theorem}{Theorem}[section]
\newtheorem{proposition}[theorem]{Proposition}
\newtheorem{lemma}[theorem]{Lemma}
\newtheorem{corollary}[theorem]{Corollary}
\theoremstyle{definition}
\newtheorem{definition}[theorem]{Definition}
\theoremstyle{remark}
\newtheorem{remark}[theorem]{Remark}
\numberwithin{equation}{section}
\DeclareMathOperator{\Aut}{Aut}
\DeclareMathOperator{\GL}{GL}
\DeclareMathOperator{\PGL}{PGL}
\DeclareMathOperator{\Prob}{Prob}
\def\F{\mathbf{F}}
\def\ie{\textit{i.e.}}
\def\cf{\textit{c.f.}}
\newcommand{\Mm}{\mathsf{M}}
\newcommand{\AAb}{\mathbb{A}}
\newcommand{\FF}{\mathbb{F}}
\newcommand{\PP}{\mathbb{P}}
\newcommand{\ZZ}{\mathbb{Z}}
\newcommand{\nn}{\mathcal{N}}
\newcommand{\vv}{\mathcal{V}}
\def\phi{\varphi}
\def\Pt{\tilde{P}}
\def\et{\text{ét}}
\title[Distributions of traces of Frobenius]{Distributions of traces of
  Frobenius \\ for smooth plane curves over finite fields}
\author[Lercier]{Reynald Lercier}
\address{%
  \textsc{DGA MI}, %
  La Roche Marguerite, %
  35174 Bruz, %
  France. %
}
\address{%
  Institut de recherche math\'ematique de Rennes, %
  Universit\'e de Rennes 1, %
  Campus de Beaulieu, %
  35042 Rennes, %
  France. %
} 
\email{reynald.lercier@m4x.org}
\author[Ritzenthaler]{Christophe Ritzenthaler}
\address{%
  Institut de recherche math\'ematique de Rennes, %
  Universit\'e de Rennes 1, %
  Campus de Beaulieu, %
  35042 Rennes, %
  France. %
}
\email{christophe.ritzenthaler@univ-rennes1.fr }
\author[Rovetta]{Florent Rovetta}
\address{%
  Institut de Math{\'e}matiques de Luminy, %
  UMR 6206 du CNRS, %
  Luminy, Case 907, 13288 Marseille, %
  France. %
}
\email{florent.rovetta@univ-amu.fr}
\author[Sijsling]{Jeroen Sijsling} 
\address{%
  Department of Mathematics, %
  27 N.\ Main Street, %
  6188 Kemeny Hall, %
  Hanover, NH 03755-3551, %
  United States of America. %
}
\email{sijsling@gmail.com}
\author[Smith]{Benjamin Smith}
\address{%
  INRIA, %
  Laboratoire d'Informatique de l'\'Ecole polytechnique (LIX), %
  91128 Palaiseau, %
  France. %
}
\email{smith@lix.polytechnique.fr}
\thanks{The authors acknowledge support by grant ANR-09-BLAN-0020-01.}
\date{\today} 
\subjclass[2010]{14Q05; 14H10; 14H25; 14H37; 14H45; 14H50}
\keywords{Genus $3$ curves; plane quartics; moduli; families; enumeration;
finite fields}
\begin{document}

\maketitle

\begin{abstract}
  In a previous article, we obtained data on the distribution of traces of
  Frobenius of non-hyperelliptic genus $3$ curves over small finite fields. In
  the present one, we give a heuristic explanation of these data, by
  extrapolating from results on the distribution of traces of Frobenius for
  plane curves whose degree is small with respect to the cardinality of their
  finite base field. In particular, our methods shed some new light on the
  asymmetry of the distribution around its mean value, which is related to the
  Serre obstruction.
\end{abstract}

\section{Introduction}\label{sec:introduction}

More than 30 years ago, Serre found closed formul\ae{} for the maximal possible
number of rational points on a curve of genus \(g \le 2\) over a finite field
$\FF_q$. The same article~\cite{serre-point} also considered the problem of
obtaining a similar formula for curves of genus $3$. This problem is more
involved, as Serre indeed noted at the time. The main obstruction to obtaining a
closed formula is now known as the \emph{Serre obstruction}: a principally
polarized abelian threefold over~$\FF_q$ that becomes the Jacobian of a
genus~$3$ curve after base extension to~$\overline{\FF}_q$ need not be the
Jacobian of such a curve over~$\FF_q$. There is no such obstruction in dimension
1, because elliptic curves are their own Jacobians; nor in dimension 2, because
all genus $2$ curves are hyperelliptic (and thus their automorphism groups are
isomorphic to those of their Jacobians, which which implies in general that such
a descent is possible). The Serre obstruction appears in dimension~$3$ precisely
because of the existence of non-hyperelliptic curves of genus~$3$.

There have been many computational and conceptual approaches to the Serre
obstruction. Partial results can be found in~\cite{ibug3, lauterg3, mestreg3,
NR08, NR09, Rit-serre}, while data for small fields can be found at
\url{manypoints.org}. More general approaches were considered in~\cite{BeauRit,
LR, LRZ}. However, so far none of these results can be used to give a closed
formula in the sense of~\cite{serre-point}.

More precisely, the issue is as follows. The number of rational points on a
given curve $C$ of genus $g$ over a finite field $\FF_q$ is given by
\[
  \# C(\FF_q) = q + 1 - t\ .
\]
Here $t$ is the trace of Frobenius acting on the first \'etale cohomology group
$H^1 (J_{\et}, \ZZ_{\ell})$ of the Jacobian $J$ of $C$, with $\ell$ some prime
not equal to $p$. The classical Hasse--Weil--Serre bound shows that the absolute
value of $t$ is bounded by $2 g \sqrt{q}$. The usual strategy for constructing
maximal curves is to construct a principally polarized abelian variety $A$ over
$\FF_q$ with a trace $t$ close to $-2 g \sqrt{q}$, so as to obtain a number of
rational points that is close to the upper bound $q + 1 + 2g \sqrt{q}$.


The Serre obstruction amounts to the fact that $A$ is not necessarily the
Jacobian of a curve $C$ over $\FF_q$. If it is not, then the quadratic twist of
$A$ will be a Jacobian instead. However, taking such a quadratic twist changes
the trace of Frobenius $t$ to $-t$, so that the resulting $C$ has few points
instead of many points. 


This article was motivated by the hope of finding a new inroad to the problem,
not by studying all descent problems for individual abelian varieties, but
rather by an indirect approach: namely, by proving that for large negative
values of $t$ there are more curves with trace $t$ than with trace~$-t$. More
precisely, let $\nn_{q,3}(t)$ be the number of non-hyperelliptic genus $3$
curves over $\FF_q$ of trace $t$ up to $\FF_q$-isomorphism. We are interested in
studying the difference
\[
  \vv_{q,3} (t) := \nn_{q,3} (t) - \nn_{q,3} (-t)
\]
for $0 \leq t \leq 6\sqrt{q}$, and more specifically in proving that $\vv_{q,3}
(t) \leq 0$  for large enough $t$. This would be enough to show (using
\cite{lauterg3}) that there always exists a curve $C$ such that $\#C(\FF_q) \geq
q + 1 + 3 \lfloor 2 \sqrt{q} \rfloor -3$, and would moreover allow us to give
the precise maximal value (see~\cite[Prop.4.1.7]{HDRritz}).

A numerical study of $\vv_{q,3}(t)$ for prime fields $\FF_q$ with $11 \leq q
\leq 53$ appears in~\cite{LRRS14}. Over these small fields, it was possible to
construct all smooth non-hyperelliptic curves of genus~$3$ (that is, all smooth
plane quartics) up to $\FF_q$-isomorphism, and to compute the trace of a
representative of each $\FF_q$-isomorphism class. The resulting functions
$\vv_{q,3} (t)$ had some remarkable properties: for example, it always appeared
that $\vv_{q,3} (t)$ was negative for $t > 1.7\sqrt{q}$, so that the
corresponding number of curves with many points was larger than the number of
curves with few points. Our original hope that $\vv_{q,3}(t)$ would always be
negative for large enough $t$ turned out to be false in general, as was noted in
\cite{LRRS14}. Nevertheless, for each $q$ the data obtained fitted simple and
pleasing graphs, and moreover these graphs almost coincided after normalizing by
an explicit power of $q$.

This phenomenon seemed interesting enough to merit further investigation.
Moreover, it fits into a larger framework of results on the distribution of the
number of rational points on curves over finite fields, as studied
in~\cite{bucur-tri, bucurp, bucurs, cheongb, entin, kurlberg, wood, xiong,
xiongb} and especially~\cite{bucur}. The results of Bucur--David--Feigon--Lalín
in~\cite{bucur} show that the number of rational points on plane curves of
degree $d$ over $\FF_q$ is distributed according to an explicit and intuitive
binomial law as long as $d$ is large enough compared to $q$. We study this
binomial law, or rather the normalized variation of this law around its mean, in
Section \ref{sec:bino}. By the central limit theorem, this variation converges
to $0$; however, we show that after multiplying by a factor of $\sqrt{q}$, its
behavior as $q$ tends to infinity can be approximated by the function
\[
  \psi : x \mapsto \frac{1}{3 \sqrt{2 \pi}} x (3 - x^2) e^{-x^2 / 2}\  .
\]

In Section~\ref{sec:pointcount}, using~\cite{bucur}, we study the variation
around the mean of the distribution of the number of rational points on plane
curves of degree $d$ over $\FF_q$ as $q \to \infty$ for both general curves
(Corollary~\ref{cor:diffsingular}) and smooth curves
(Corollary~\ref{cor:diffsmooth}). Along the way, we show by elementary methods
that the bound $d \geq q^2 + q$ of~\cite[Prop.~1.6]{bucur} can be improved to $d
\geq 2q - 1$ (in Proposition~\ref{prop:bucurplus}). We note that in these cases,
where $d$ is large compared to $q$, the fact that there are more curves with
large negative trace $t$ than with trace $-t$ is easy to see, since a curve with
a large positive trace would have a negative number of rational points.

In Section 4, in contrast to the approach for sufficiently large $d$ above, we
instead fix the particular small value $d = 4$, while still letting $q$ tend to
$\infty$. Here it remains a challenge to prove any exact results, but the
comparison with our experiments in~\cite{LRRS14} is impressive. It would be
especially interesting to see whether the observed phenomena persist for larger
$q$ than those considered in these experiments. 
Our results are also consistent with those of~\cite{achter-kedlaya}, where it is
conjectured that the distribution of the fraction of curves of genus $g$ over a
fixed finite field $\FF_q$ whose number of points equals $n$ tends to a Poisson
distribution as $g$ tends to infinity. More precisely, it is suggested as one
runs over a set of curves over $\FF_q$ that represent the $\FF_q$-rational
points of the moduli stack of curves $\Mm_g$, one should have the following limit
behavior:
\[
  \lim_{g \to \infty} \# \left\{ C \in \Mm_g ( \F_q ) : \# C (\FF_q) = n \right\}
  = \frac{\lambda^n e^{-\lambda}}{n!}
    \quad \text{where $\lambda = q + 1 + \frac{1}{q-1}$\ .}
\]
The same argument as in the proof of Proposition~\ref{pro:lb} shows that the
variation around the mean gives rise to the same distribution that we obtain in
our paper.

\subsection*{Acknowledgments} 
We are very grateful to Mohamed Barakat for his helpful comments,
Masaaki Homma for his proof of Lemma~\ref{lemma:freeres}, Everett Howe for email
exchanges which led us to the heuristic interpretation described in this
article, and Atilla Yilmaz for pointing out the link between our statistical
result and Edgeworth series.


\section{Some remarks on the binomial distribution}
\label{sec:bino}

We begin by analyzing the asymmetry  of the binomial distribution around its
mean in a general context. We will then adapt the parameters according to our
arithmetical problems.

%
Let us consider the the binomial random variable of standard deviation $\sigma$
\[
  S_{\sigma} := \sum_{i=1}^{N_{\sigma}} B_{\sigma,i}
  \ 
\]
given by $N_{\sigma}$ Bernoulli random variables $B_{\sigma,i}$ taking the value
$1$ with probability $\mu_{\sigma}$. Let 
\[
  b_{\sigma} (m) 
  :=  
  \Prob(S_{\sigma} = m) 
  = 
  {N_{\sigma} \choose m} \mu_{\sigma}^m (1-\mu_{\sigma})^{(N_{\sigma} - m)}
\]
be the corresponding binomial mass function of mean 
$E_{\sigma} = N_{\sigma} \mu_{\sigma}$.
We have the usual relation 
\[
  \sigma = \sqrt{N_{\sigma} \mu_{\sigma} (1 - \mu_{\sigma})} \ .
\]

Let us now consider an increasing sequence of $\sigma$. For any fixed real
$\alpha > 0$, we define a sequence of intervals $I_{\sigma} =
[-\sigma^{1-\alpha},\sigma^{1-\alpha}]$. In order to deal with approximations in
both $\sigma$ and $x \in I_{\sigma}$, we need one more piece of notation.

\begin{definition}\label{def:ofamily}
  Suppose that for all (sufficiently large)  ${\sigma}$ we are given an interval
  $I_{\sigma}$ along with functions $f_{\sigma}$ and $g_{\sigma}$ on
  $I_{\sigma}$. We write $f_{\sigma} = o (g_{\sigma})$ if for all $\epsilon >0$,
  there exists a ${\sigma}_0$ such that 
  \[
    {|f_{\sigma}(x)|} \leq \epsilon |g_{\sigma}(x)|
    \quad 
    \forall x \in I_{\sigma}\ ,\ 
    \forall {\sigma} > {\sigma}_0 
    \ .
  \]
  The notation $f_{\sigma} = O (g_{\sigma})$ is defined similarly.
\end{definition}



The triangular central limit theorem~\cite[Th.27.3]{billingsley}
shows\footnote{
  This can also be deduced from the De Moivre--Laplace theorem~\cite[Ex.
  25.11]{billingsley}, which claims that if $(m_{\sigma})_{\sigma}$ is any
  sequence of integers such that $(m_{\sigma} - E_\sigma)/\sigma$ tends to some
  real $x$ as ${\sigma} \to \infty$ then
  \( 
    \lim_{{\sigma} \to \infty} \sigma \Prob(S_{{\sigma}} = m_{\sigma}) = \frac{1}{\sqrt{2 \pi}} e^{-\frac{1}{2}
  x^2}  
  \).
  }
that the normalized average sequence
\(
  Y_{{\sigma}} := (S_{\sigma} - E_\sigma)/\sigma
\)
converges in law to the standard normal distribution: that is, the cumulative
distribution law of the random variable~$Y_{{\sigma}}$ converges to the
integral of the Gaussian density. 
We therefore see that $Y_{\sigma} (x) - Y_{\sigma} (-x)$ tends to $0$ as
${\sigma} \to \infty$. 

In order to deal with asymptotic approximations (in $\sigma$) of $S_{\sigma}$,
we introduce a new continuous function that we will call $b$. We define
 \begin{align*}
  m ({\sigma},x) & := E_{\sigma} - \sigma x\ , \\
  n ({\sigma},x) & := N_{\sigma} (1-\mu_{\sigma}) + \sigma x = N_{\sigma} - m ({\sigma},x) 
  \ .
\end{align*}
Then if ${\sigma}$ is large enough, we define the aforementioned continuous
function $b$ by
\begin{equation}\label{eq:bdef}
  b({\sigma},x) 
  := 
  \frac{\Gamma(N_{\sigma} + 1)}{\Gamma(n({\sigma},x) + 1) \Gamma(m({\sigma},x) + 1)}
  \mu_{\sigma}^{m({\sigma},x)}(1-\mu_{\sigma})^{n({\sigma},x)}
  \ .
\end{equation}
Observe that if $m ({\sigma},x)$ happens to be integral, then 
\[
  b ({\sigma},x) = b_{\sigma} (m ({\sigma},x))
  \ ;
\]
as such, $b ({\sigma},x)$ should be thought of as the collection of continuous
interpolations of these values, with the difference that we have switched to
using the normalized parameter $x$ instead of $m$.

\begin{proposition}\label{pro:lb}
With the previous notation, assume that $E_{\sigma}=
\sigma^2 + 1 + O(1/\sigma^2)$ when $\sigma$ tends to infinity,
and let $\alpha>0$ be any fixed real.
Then for $x \in [-\sigma^{1-\alpha},\sigma^{1-\alpha}]$\ ,
we have
\[
  b(\sigma,x) 
  = 
  \left(\frac{1}{\sqrt{2\pi}}{e^{-\frac{1}{2}x^2}}\right)\frac{1}{\sigma}
  -
  \left(\frac{1}{6\sqrt{2\pi}}x(x^2-3)e^{-\frac{1}{2}x^2}\right)\frac{1}{\sigma^2}
  +
  O\left(\frac{1}{\sigma^3}\right)
  \ .
\]
\end{proposition}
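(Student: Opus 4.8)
### Proof proposal

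The plan is to treat $b(\sigma, x)$ as a function to which we apply Stirling's asymptotic expansion for the Gamma function, together with careful Taylor expansions of the resulting logarithm, uniformly in $x$ over the interval $I_\sigma = [-\sigma^{1-\alpha}, \sigma^{1-\alpha}]$. The key observation is that the condition $E_\sigma = \sigma^2 + 1 + O(1/\sigma^2)$ forces $N_\sigma \sim \sigma^2/\mu_\sigma$ and $\mu_\sigma \to 0$ (since $\sigma^2 = N_\sigma \mu_\sigma(1-\mu_\sigma)$ and $E_\sigma = N_\sigma\mu_\sigma \approx \sigma^2$ means $1-\mu_\sigma \approx 1$); I would first extract the precise asymptotics of $N_\sigma$ and $\mu_\sigma$ in terms of $\sigma$ from these two relations, keeping enough lower-order terms (down to order $1/\sigma^2$ relative error) to control the final expansion to order $1/\sigma^3$.

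The main computation is to write
\[
  \log b(\sigma, x)
  =
  \log \Gamma(N_\sigma + 1) - \log\Gamma(n(\sigma,x)+1) - \log\Gamma(m(\sigma,x)+1)
  + m(\sigma,x)\log\mu_\sigma + n(\sigma,x)\log(1-\mu_\sigma),
\]
and then apply Stirling's formula $\log\Gamma(z+1) = z\log z - z + \tfrac12\log(2\pi z) + \tfrac{1}{12z} + O(1/z^3)$ to each of the three Gamma factors. Here $m(\sigma,x) = E_\sigma - \sigma x$ is of size $\sigma^2$ and $n(\sigma,x) = N_\sigma - m(\sigma,x)$ is of size $N_\sigma$, both large on all of $I_\sigma$ (this is where $\alpha > 0$ is used: $\sigma x = O(\sigma^{2-\alpha})$ is negligible compared to $\sigma^2$, so $m(\sigma,x)$ stays positive and large). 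After substituting, the dominant terms should cancel by the definition of $b$ (they encode the "maximum" of the binomial at its mean), leaving a clean expression. I would then Taylor-expand $\log m(\sigma,x) = \log E_\sigma + \log(1 - \sigma x/E_\sigma)$ and the analogous logarithm for $n(\sigma,x)$ in powers of $\sigma x / E_\sigma = O(1/\sigma)$ and $\sigma x/N_\sigma = O(\mu_\sigma/\sigma)$, collecting contributions by powers of $\sigma$; the quadratic term produces $-x^2/2$, the cubic term produces the $x(x^2-3)/(6\sigma)$ correction (after combining the Stirling $1/(12z)$ pieces and the cross-terms), and everything of order $1/\sigma^2$ and smaller in $\log b$ goes into the error. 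Finally I would exponentiate, using $e^{-x^2/2}(1 + c_1(x)/\sigma + O(1/\sigma^2))$ and the prefactor $(2\pi\sigma^2)^{-1/2}$ coming from the $\tfrac12\log(2\pi z)$ Stirling terms, to obtain the stated two-term expansion with remainder $O(1/\sigma^3)$ in the sense of Definition~\ref{def:ofamily}.

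The delicate point — and the step I expect to be the main obstacle — is bookkeeping the uniformity in $x$ and verifying that each discarded term is genuinely $O(g_\sigma)$ in the strong sense of Definition~\ref{def:ofamily}, i.e. that the implied constants do not degrade as $|x|$ approaches $\sigma^{1-\alpha}$. The quantities being expanded are $\sigma x/E_\sigma$ (size $\sigma^{-\alpha}$ at worst) and $\sigma x/N_\sigma$ (size $\mu_\sigma\sigma^{-\alpha}$), so each Taylor remainder after $k$ terms is $O(\sigma^{-k})$ times a polynomial in $x$, and one must check that multiplying by the surviving $e^{-x^2/2}$ keeps these dominated by the next term of the claimed expansion; the Gaussian decay is what saves the high-degree polynomial remainders, but one has to be careful that $e^{-x^2/2}$ is compared against $|g_\sigma(x)|$ which itself carries a Gaussian factor, so the comparison is really about polynomial growth. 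A secondary subtlety is that $\mu_\sigma$ is not given to us exactly — only through $E_\sigma$ and $\sigma$ — so all the coefficients must be shown to be insensitive to the $O(1/\sigma^2)$ ambiguity in $E_\sigma$; concretely, perturbing $E_\sigma$ by $O(1/\sigma^2)$ perturbs $\log b$ by $O(1/\sigma^2 \cdot \sigma x/E_\sigma) = O(1/\sigma^5 \cdot \text{poly})$, which is absorbed. Once the uniformity is nailed down, the rest is a (lengthy but routine) power-series computation.
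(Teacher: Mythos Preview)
Your plan is essentially the paper's own proof: Stirling on the three Gamma factors, Taylor expansion of $\log(1-\sigma x/E_\sigma)$ and its analogue for $n$, then exponentiation with the Gaussian $e^{-x^2/2}$ absorbing the polynomial remainders uniformly on $I_\sigma$. One small correction to your bookkeeping: the $1/(12z)$ Stirling pieces are $O(1/\sigma^2)$ and land entirely in the error term; the linear-in-$x$ part of the $x(x^2-3)$ correction comes instead from expanding the $\tfrac12\log(2\pi m)$ Stirling contribution (the paper packages this as the prefactor $\sqrt{N/(mn)} = \sigma^{-1}(1 + x/(2\sigma) + O(x^2/\sigma^2))$, which combines with the cubic Taylor term $-x^3/(6\sigma)$ from $e^{-(A+B)}$ to produce $-x(x^2-3)/(6\sigma^2)$).
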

We defer the proof, which is long and technical, to Section~\ref{sec:lem}.

\begin{corollary}\label{cor:binom}
With the notation of Proposition~\ref{pro:lb},
\begin{displaymath}
  b(\sigma,x) - b(\sigma,-x)
  = 
  \left(\frac{1}{3\sqrt{2\pi}} x (3 - x^2) e^{-\frac{1}{2}x^2} \right)
  \frac{1}{\sigma^2} + O \left( \frac{1}{\sigma^3} \right)
  \ .
\end{displaymath}
\end{corollary}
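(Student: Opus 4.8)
The plan is to deduce the corollary directly from Proposition~\ref{pro:lb} by applying the stated asymptotic expansion twice, once at $x$ and once at $-x$, and subtracting. The only mild point requiring attention is that the error term is controlled in the sense of the $o$-family notation of Definition~\ref{def:ofamily}, so I must first check that the expansion of Proposition~\ref{pro:lb} is legitimately applicable with argument $-x$ as well. This is immediate: the interval $I_\sigma = [-\sigma^{1-\alpha}, \sigma^{1-\alpha}]$ is symmetric about the origin, so $x \in I_\sigma$ implies $-x \in I_\sigma$, and hence the expansion
\[
  b(\sigma,-x)
  =
  \left(\tfrac{1}{\sqrt{2\pi}}e^{-\frac12 x^2}\right)\tfrac{1}{\sigma}
  -
  \left(\tfrac{1}{6\sqrt{2\pi}}(-x)(x^2-3)e^{-\frac12 x^2}\right)\tfrac{1}{\sigma^2}
  + O\!\left(\tfrac{1}{\sigma^3}\right)
\]
holds for all $x \in I_\sigma$, with the $O$ understood in the same $I_\sigma$-uniform sense.

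Next I would subtract the two expansions termwise. The leading term $\tfrac{1}{\sqrt{2\pi}}e^{-\frac12 x^2}\tfrac{1}{\sigma}$ is even in $x$ and therefore cancels. The second term is odd in $x$, so subtracting its value at $-x$ doubles it: the contribution becomes
\[
  -\,\tfrac{1}{6\sqrt{2\pi}}x(x^2-3)e^{-\frac12 x^2}\tfrac{1}{\sigma^2}
  -
  \tfrac{1}{6\sqrt{2\pi}}x(x^2-3)e^{-\frac12 x^2}\tfrac{1}{\sigma^2}
  =
  -\,\tfrac{1}{3\sqrt{2\pi}}x(x^2-3)e^{-\frac12 x^2}\tfrac{1}{\sigma^2},
\]
which is exactly $\tfrac{1}{3\sqrt{2\pi}}x(3-x^2)e^{-\frac12 x^2}\tfrac{1}{\sigma^2}$, the claimed main term. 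Finally, the difference of two families that are each $O(1/\sigma^3)$ on $I_\sigma$ is again $O(1/\sigma^3)$ on $I_\sigma$ (by the triangle inequality applied pointwise in $x$, with $\sigma_0$ taken as the maximum of the two thresholds for a given $\epsilon$), so the error term of the statement is justified.

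There is essentially no serious obstacle here: the corollary is a formal consequence of Proposition~\ref{pro:lb}, and the entire argument is the observation that the order-$\sigma^{-1}$ term of the expansion is an even function of $x$ while the order-$\sigma^{-2}$ term is odd, together with the harmless bookkeeping needed to see that the $o$-family $O(1/\sigma^3)$ behaves well under reflection $x \mapsto -x$ and subtraction. If anything, the one thing worth stating explicitly for the reader is the symmetry of $I_\sigma$, since without it the substitution $x \mapsto -x$ would not be permitted in the uniform estimate.
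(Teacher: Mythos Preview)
Your proposal is correct and matches the paper's approach: the paper states the corollary without proof, treating it as an immediate consequence of Proposition~\ref{pro:lb} by the parity observation you spell out (the $\sigma^{-1}$ term is even in $x$, the $\sigma^{-2}$ term is odd, and the symmetric interval $I_\sigma$ makes the substitution $x\mapsto -x$ legitimate). One tiny quibble: in your parenthetical about combining the error terms you speak of ``thresholds for a given $\epsilon$,'' which is the language for $o$ rather than $O$; for $O$ you simply add the two implied constants and take the larger threshold, but this does not affect the argument.
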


\begin{remark}
  The error terms in the central limit theorem are well-studied, and there are
  powerful tools to deal with them in the literature. However, the usual
  Edgeworth series techniques do not seem to apply here. Indeed, while the
  expressions involved are identical (for example, the polynomial $x(3 - x^2)$
  appearing in Proposition~\ref{pro:lb} is the negative of the third Hermite
  polynomial~\cite[Sec.~3.4]{kolassa}), the interval \(I_{\sigma}\) of
  convergence we obtain, which is optimal, is not the interval predicted by a
  formal application of Edgeworth series we found in the literature.
\end{remark}

\begin{figure}[htbp ]
  \centering
  \includegraphics[height=4cm]{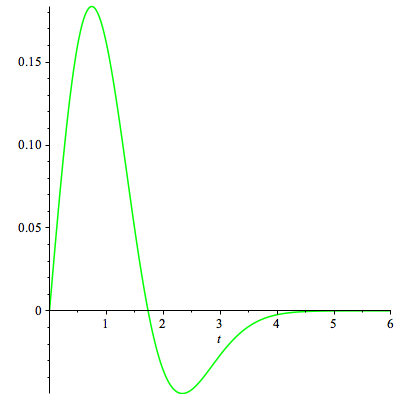}
  \caption{The graph of the approximation $\psi : x \mapsto \frac{1}{3 \sqrt{2 \pi}} x
    (x^2 - 3) e^{-x^2 / 2}$.}
  \label{fig:asym}
\end{figure}

We will use these results  in Sections~\ref{sec:bino} and~\ref{sec:pointcount}
with the following parameters. We assume that the probabilities for a plane
curve over \(\FF_q\) to pass through any one of the $q^2 + q + 1$ points of the
projective plane are described by independent and identically distributed random
variables with probability \(1/q + O(1/q^2)\). More precisely, we will apply the
results for the following values.
 \begin{enumerate}
%
 \item In Corollary~\ref{cor:diffsingular} we will have  
     $\sigma = \sqrt{q - 1/q^2}= \sqrt{q} + O(q^{-5/2})$, 
     $\mu_{\sigma} = 1/q$, 
     and 
     $$E_{\sigma}= (q^2+q+1)/q = \sigma^2+ 1 +O(1/q)=\sigma^2+1+O(1/\sigma^2).$$  
 \item  In Corollary~\ref{cor:diffsmooth} we will have  
     $\sigma= \sqrt{q (1 - \frac{1}{q^2 + q + 1})}$, 
     $\mu_{\sigma}=(q+1)/(q^2+q+1)$,
     and 
     $$E_{\sigma} = q+1 = \sigma^2 + 1 + O(1/\sigma^2).$$
\end{enumerate}
In both situations, we meet the hypotheses of Proposition~\ref{pro:lb}.
Note that we can also apply our results to affine plane curves. Indeed, we pass
through $q^2$ points with probability $\mu_\sigma = 1/q$ so we have $\sigma =
\sqrt{q-1}$ and $ E_\sigma = \sigma ^2+1$.

In the arithmetic setting below, the 
normalization will not be $m(\sigma,x)=E_{\sigma} - \sigma x$ but
$q+1-\sqrt{q} x$. Hence we need to check that we can transfer the perturbation
on $x$ induced by this transformation into the $O(1/\sigma ^3)$. Since the main
term of the expansion in Corollary~\ref{cor:binom} is
$x(x^2-3)e^{-\frac{1}{2}x^2}/{\sigma^2}$, if we replace $x$ by
$(m(\sigma,x)-(q+1))/\sqrt{q} = x+O(1/\sigma)$, then the result still holds.
This is the case in our applications.




\section{Relation with the number of points on plane curves over finite fields.}
\label{sec:pointcount}

The relation between the considerations in the previous section and the
distribution of traces of Frobenius of plane curves over finite fields was first
described by Bucur--David--Feigon--Lal\'in~\cite{bucur}. Let $\FF_q$ be the
finite field with $q$ elements, and let $R = \FF_q [x,y,z]$ be the homogeneous
coordinate ring of the projective plane $\PP^2$ over $\FF_q$. For $f$ in $R$, we
let $C_f \subset \PP^2$ be the plane curve defined by $f = 0$. Intuitively, the
probability that a given point $P$ in $\PP^2(\FF_q)$ lies on $C_f$ should equal
$1/q$, since \emph{a priori} the set of possible values for $f$ at $P$ has $q$
elements (and \(P\) is on \(C\) if and only if \(f(P) = 0\)). Supposing
furthermore that the probabilities are independent as $P$ varies over
\(\PP^2(\FF_q)\), we essentially find ourselves in the situation of
Section~\ref{sec:bino}.

More precisely, let $R_d \subset R$ be the subset of homogeneous polynomials of
degree $d$, and consider curves defined by polynomials in $R_d$. Then the
following seminal result from~\cite{bucur} shows that
the intuition above is accurate as long as $d$ is large enough with respect to
$q$. 

\begin{proposition}[{\cite[Prop. 1.6]{bucur}}]\label{prop:bucur}
  Let $B_1,\ldots,B_{q^2+q+1}$ be i.i.d.\ Bernoulli random variables that assume
  the value $1$ with probability $1/q$. If $d \geq q^2 + q$, then 
  \[
    \frac{\# \{ f \in R_d : \#C_f(\FF_q) = n \} }{\# R_d} =
    \Prob(B_1 + \ldots + B_{q^2 + q + 1} = n) 
    \ .
  \]
\end{proposition}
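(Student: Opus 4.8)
The plan is to prove Proposition~\ref{prop:bucur} directly, by showing that the distribution of $\#C_f(\FF_q)$ over $f \in R_d$ is \emph{exactly} the convolution of $q^2+q+1$ independent Bernoulli$(1/q)$ variables as soon as $d \geq q^2+q$. The key point is that $\#C_f(\FF_q) = \#\{P \in \PP^2(\FF_q) : f(P) = 0\}$, so it suffices to understand the joint distribution of the values $(f(P))_{P \in \PP^2(\FF_q)}$ — or rather, the vanishing pattern of $f$ on the finite set $\PP^2(\FF_q)$ — as $f$ ranges uniformly over $R_d$. Concretely, fix a lift of each point of $\PP^2(\FF_q)$ to $\FF_q^3 \setminus \{0\}$, and for each $P$ let $\mathrm{ev}_P : R_d \to \FF_q$ be the (linear) evaluation map $f \mapsto f(\tilde P)$. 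Assembling these gives a linear evaluation map
\[
  \mathrm{ev} : R_d \longrightarrow \FF_q^{\PP^2(\FF_q)},
  \qquad f \longmapsto (f(\tilde P))_{P \in \PP^2(\FF_q)}\ .
\]
The curve $C_f$ passes through exactly the points $P$ where the $P$-th coordinate of $\mathrm{ev}(f)$ vanishes. So $\#C_f(\FF_q) = n$ iff $\mathrm{ev}(f)$ has exactly $\#(\PP^2(\FF_q)) - n$ nonzero coordinates.

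First I would show that $\mathrm{ev}$ is \emph{surjective} for $d \geq q$, or at least for $d \geq q^2 + q$. This is the crucial algebraic input: one needs that the global sections of $\oo_{\PP^2}(d)$ surject onto functions on the finite subscheme $\PP^2(\FF_q)$, equivalently that $H^1$ of the ideal sheaf twisted by $d$ vanishes, equivalently that $\PP^2(\FF_q) \subset \PP^2$ imposes independent conditions on degree-$d$ forms. For $d$ this large one can give an elementary argument: e.g. using that any function on $\PP^1(\FF_q)$ (the line, $q+1$ points) is cut out by forms of degree $q+1$ via the polynomial $\prod(t - a)$ type interpolation, and then fibering $\PP^2(\FF_q)$ over a line, or alternatively invoking that the reduced vanishing ideal of $\PP^2(\FF_q)$ is generated in degree $\leq q+1$ (this is classical, related to the $\FF_q$-rational points being the zero locus of $x^q z - x z^q$, etc.) so that its Hilbert function stabilizes at $q^2+q+1$ by degree $q+1 \leq q^2+q$. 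The hypothesis $d \geq q^2+q$ in \cite{bucur} is deliberately generous; the point of Proposition~\ref{prop:bucurplus} later in the paper is precisely to sharpen it to $d \geq 2q-1$ by a more careful version of this argument.

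Granting surjectivity of $\mathrm{ev}$, the fibers of $\mathrm{ev}$ all have the same cardinality $\#R_d / q^{q^2+q+1}$ (the kernel is a linear subspace, and every fiber is a coset of it). Therefore, pushing the uniform distribution on $R_d$ forward along $\mathrm{ev}$ yields the \emph{uniform} distribution on $\FF_q^{\PP^2(\FF_q)} \cong \FF_q^{q^2+q+1}$. Under the uniform distribution on $\FF_q^{q^2+q+1}$, the coordinates are i.i.d.\ uniform on $\FF_q$, so the indicator that the $P$-th coordinate is zero is a Bernoulli$(1/q)$ variable, and these $q^2+q+1$ indicators are independent. Hence
\[
  \frac{\#\{f \in R_d : \#C_f(\FF_q) = n\}}{\#R_d}
  = \Prob\bigl(\#\{i : B_i' = 1\} = n\bigr) \quad \text{with } B_i' \text{ i.i.d.\ Bernoulli}(1/q)\ ,
\]
where $B_i'$ is the indicator that the $i$-th coordinate vanishes; renaming $B_i := B_i'$ gives exactly the asserted identity $\Prob(B_1 + \cdots + B_{q^2+q+1} = n)$.

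The main obstacle is the surjectivity of the evaluation map $\mathrm{ev}$ — everything after that is a short and formal linear-algebra/probability argument. So the heart of the matter is the bound on $d$ ensuring that $\PP^2(\FF_q)$ imposes independent conditions on degree-$d$ forms; for the stated weak bound $d \geq q^2+q$ one can afford a crude argument (for instance, reducing to the one-variable interpolation on lines and dehomogenizing), whereas extracting the optimal bound requires the finer regularity analysis carried out in Proposition~\ref{prop:bucurplus}.
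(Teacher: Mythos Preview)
Your proposal is correct and follows essentially the same route as the paper: define the linear evaluation map $R_d \to \FF_q^{q^2+q+1}$, show it is surjective for the relevant range of $d$, and deduce from the equal-size fibers that the pushforward of the uniform distribution is uniform, whence the vanishing indicators are i.i.d.\ Bernoulli$(1/q)$. The only difference is in the surjectivity step: the paper (Lemma~\ref{lemma:freeres}) writes down the explicit form $x^{d-(2q-2)}(y^{q-1}-x^{q-1})(z^{q-1}-x^{q-1})$ and uses the transitive $\PGL_3(\FF_q)$-action to produce a full set of ``delta functions'', whereas you sketch a regularity/independent-conditions argument; but you already note that this is precisely the content of Proposition~\ref{prop:bucurplus}, so the two presentations are aligned.
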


The original proof of Proposition~\ref{prop:bucur} is based on a general result
of Poonen~\cite{poonen-bertini}. However, we will give another proof by
elementary means, improving the bound on $d$ in the process. 

We choose an enumeration $P_1, \ldots, P_{q^2+q+1}$ of the set $\PP^2 (\FF_q)$,
which we in turn lift to $\FF_q$-rational affine representatives
$\Pt_1,\ldots,\Pt_{q^2 + q + 1}$ in $\AAb^3 (\FF_q)$. 
This allows us to define the linear map
\begin{align*}
    L :  R_d (\FF_q) & \longrightarrow \FF_q^{q^2+q+1}\ , \\
    f & \longmapsto (f(\Pt_1),\ldots, f(\Pt_{q^2+q+1}))\ .
\end{align*}
The forms in the kernel of $L$ define the plane curves that pass through all
rational points of the projective plane; these are called \emph{plane filling
curves}. It is known that the kernel of $L$ is $R_d(\FF_q) \cap J$, where $J$
is the ideal generated by $x^q y - y^q x$, $y^q z - z^q y$ and $z^q x - x^q z$
(see~\cite{tallini1,tallini2} and~\cite[Prop.2.1]{homma-filling}).

\begin{lemma}\label{lemma:freeres}
  For $d \geq 2 q - 1$ the map $L$ is surjective.
\end{lemma}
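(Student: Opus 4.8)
The plan is to show that the linear map $L \colon R_d(\FF_q) \to \FF_q^{q^2+q+1}$ is surjective for $d \geq 2q-1$ by a dimension count, using the known description of $\ker L = R_d(\FF_q) \cap J$, where $J = (x^qy - xy^q,\ y^qz - yz^q,\ z^qx - zx^q)$. Since $\dim_{\FF_q} R_d = \binom{d+2}{2}$ and the target has dimension $q^2+q+1$, surjectivity is equivalent to
\[
  \dim_{\FF_q} (R_d \cap J) = \binom{d+2}{2} - (q^2+q+1)
  \quad \text{for all } d \geq 2q-1.
\]
So the real task is to compute the Hilbert function of the homogeneous ideal $J$ in degrees $d \geq 2q-1$, and in particular to show it stabilizes to the expected value $q^2+q+1$ (the number of points of $\PP^2(\FF_q)$, which is the degree of the zero-dimensional scheme cut out by $J$, counted appropriately — in fact $V(J) = \PP^2(\FF_q)$ as a reduced set of points, so the Hilbert polynomial of $R/J$ is the constant $q^2+q+1$).

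First I would try to get my hands on a free resolution of $R/J$ as a graded $R$-module. The three generators $g_1 = y^qz - yz^q$, $g_2 = z^qx - zx^q$, $g_3 = x^qy - xy^q$ each have degree $q+1$; one checks the "cyclic" syzygy $x^q g_1 + y^q g_2 + z^q g_3 = 0$ (up to signs/relabeling — this is the key algebraic identity, essentially that the $g_i$ are the $2\times 2$ minors, or a Koszul-like relation, of the matrix with rows $(x,y,z)$ and $(x^q,y^q,z^q)$), and likewise $x g_1 + y g_2 + z g_3 = 0$. These two linear-and-degree-$q$ syzygies should generate the first syzygy module, and then the resolution should terminate, giving something like
\[
  0 \to R(-q-3) \to R(-q-1)^{\oplus 2} \to R(-q-1)^{\oplus 3}
  \to R \to R/J \to 0,
\]
with the precise twists to be pinned down by bookkeeping. (This is the step I would hand to Masaaki Homma, per the acknowledgments — it is plausibly the cleanest route and the one the authors actually took.) Taking Euler characteristics of Hilbert functions across this exact sequence then gives $\dim (R/J)_d$ as an explicit alternating sum of binomial coefficients, and one reads off that it equals $q^2+q+1$ as soon as all the shifted binomial coefficients $\binom{d-k+2}{2}$ appearing are "in the polynomial range," i.e.\ for $d$ large enough — and the threshold that falls out is exactly $d \geq 2q-1$.

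The main obstacle I expect is establishing that the two natural syzygies actually \emph{generate} the full syzygy module and that the resolution has the stated shape (equivalently, that $\mathrm{depth}$ / Cohen–Macaulayness considerations force no hidden higher syzygies). A clean way around proving minimality by hand: instead exhibit the complex above, check it is a complex (pure computation with the identities $xg_1+yg_2+zg_3 = 0$ and $x^qg_1+y^qg_2+z^qg_3=0$), check exactness at the left (the last map $R(-q-3) \to R(-q-1)^{2}$ is injective because its entries are a regular sequence, or just because a nonzero twisted map of line bundles is injective), and then verify exactness in the middle either by a local/Buchsbaum–Eisenbud rank-and-depth criterion or by directly matching Hilbert functions against the independently-known fact $\dim(R/J)_d = q^2+q+1$ for $d \gg 0$ (the points of $\PP^2(\FF_q)$ impose independent conditions in high degree). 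If one prefers to avoid resolutions entirely, an alternative is a direct argument: show that evaluation at the $q^2+q+1$ points is already surjective on $R_d$ for $d = 2q-1$ by an explicit interpolation construction — for each point $P_i$ build a form of degree $2q-1$ vanishing at all $P_j$ with $j \neq i$ but not at $P_i$, e.g.\ as a product of $q-1$ lines through $\PP^2(\FF_q) \setminus \{P_i\}$ organized into pencils (using that $q-1$ lines of a pencil through a point cover all the other rational points of $\PP^2$). Once $L$ is surjective in degree $2q-1$, surjectivity for all $d \geq 2q-1$ follows by multiplying preimages by any nonzero linear form, completing the proof.
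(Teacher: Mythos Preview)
Your main route via a free resolution of $R/J$ is valid in principle but considerably heavier than what the paper does, and the displayed complex is not the right one: the ideal $J$ is the ideal of $2\times 2$ minors of the $2\times 3$ matrix with rows $(x,y,z)$ and $(x^q,y^q,z^q)$, so by Hilbert--Burch it has projective dimension~$2$, with resolution
\[
  0 \longrightarrow R(-q-2)\oplus R(-2q-1) \longrightarrow R(-q-1)^{\oplus 3} \longrightarrow R \longrightarrow R/J \longrightarrow 0,
\]
the first map being the transpose of that matrix (encoding exactly your two syzygies of degrees $1$ and $q$). With these twists the Hilbert-function count does give $\dim(R/J)_d = q^2+q+1$ precisely from $d=2q-1$ on, so the argument can be completed, but your stated length-$3$ complex with uniform twists would not.

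The paper, by contrast, bypasses all of this with a one-line explicit interpolation: for $d\ge 2q-1$ the form
\[
  x^{\,d-(2q-2)}\,(y^{q-1}-x^{q-1})\,(z^{q-1}-x^{q-1})
\]
is nonzero at $(1:0:0)$ and vanishes at every other point of $\PP^2(\FF_q)$; transitivity of $\PGL_3(\FF_q)$ then produces such a form for each rational point, so $L$ is surjective. This is exactly the ``direct interpolation'' alternative you sketch at the end, made completely explicit (two pencils of $q-1$ lines each, times a power of a coordinate). What your resolution approach buys is structural information --- the exact Hilbert function of $J$ in all degrees and the Castelnuovo--Mumford regularity of the point set $\PP^2(\FF_q)$ --- but for the surjectivity statement alone the paper's explicit polynomial is both shorter and sharper.
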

\begin{proof}
  This proof is due to Masaaki Homma (see also his forthcoming
  article~\cite{homma-tobe}). For any $d \geq 2q-1$, the degree $d$ polynomial
  $x^{d-(2q-2)} (y^{q-1}-x^{q-1}) (z^{q-1}-x^{q-1})$ in $R_d$ takes a non-zero
  value at $(1:0:0)$, and $0$ at every other point in $\PP^2(\FF_q)$. Using the
  transitive action of $\PGL_3(\FF_q)$, we see that we can construct degree $d$
  polynomials having the same properties for any rational point of the plane.
  Therefore the evaluation map $L$ is surjective.
\end{proof}

\begin{proposition}\label{prop:bucurplus}
  Proposition~\ref{prop:bucur} holds for $d \geq 2q - 1$.
\end{proposition}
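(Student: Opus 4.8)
The plan is to follow the structure of the original proof in~\cite{bucur}, but to replace the input from Poonen's theorem by the elementary surjectivity statement of Lemma~\ref{lemma:freeres}. First I would observe that for any $f \in R_d(\FF_q)$, the number of points of $\PP^2(\FF_q)$ lying on $C_f$ depends only on the vector $L(f) = (f(\Pt_1),\dots,f(\Pt_{q^2+q+1})) \in \FF_q^{q^2+q+1}$; indeed, $\#C_f(\FF_q) = \#\{\, i : f(\Pt_i) = 0 \,\}$, where one checks that this count is independent of the choice of affine representatives $\Pt_i$ (scaling $\Pt_i$ by $\lambda \in \FF_q^\times$ multiplies the corresponding coordinate by $\lambda^d$, which does not affect whether it vanishes). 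Thus $\#C_f(\FF_q) = n$ holds if and only if $L(f)$ lies in the set $W_n \subset \FF_q^{q^2+q+1}$ of vectors with exactly $q^2+q+1-n$ nonzero coordinates.

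Next I would use Lemma~\ref{lemma:freeres}: for $d \geq 2q-1$ the map $L$ is surjective, so $L$ induces a bijection between the cosets of $\ker L$ in $R_d(\FF_q)$ and the points of $\FF_q^{q^2+q+1}$, and in particular every fibre of $L$ has the same cardinality $\#\ker L$. Therefore
\[
  \frac{\#\{\, f \in R_d : \#C_f(\FF_q) = n \,\}}{\#R_d}
  = \frac{\#\ker L \cdot \#(L^{-1}(W_n) / \ker L)}{\#\ker L \cdot \#(\FF_q^{q^2+q+1})}
  = \frac{\#W_n}{q^{q^2+q+1}}\ .
\]
It remains to identify $\#W_n / q^{q^2+q+1}$ with $\Prob(B_1 + \dots + B_{q^2+q+1} = n)$. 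This is a direct count: a uniformly random vector in $\FF_q^{q^2+q+1}$ has each coordinate independently equal to $0$ with probability $1/q$ and nonzero with probability $1 - 1/q$; hence the number of zero coordinates is distributed as a sum of $q^2+q+1$ i.i.d.\ Bernoulli$(1/q)$ variables, and $\#W_n/q^{q^2+q+1} = \binom{q^2+q+1}{n}(1/q)^n(1-1/q)^{q^2+q+1-n} = \Prob(B_1+\dots+B_{q^2+q+1}=n)$, which is exactly the right-hand side of Proposition~\ref{prop:bucur}.

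The only real content beyond bookkeeping is the surjectivity of $L$, and that is precisely what Lemma~\ref{lemma:freeres} supplies for $d \geq 2q-1$; everything else is the observation that surjectivity of a linear map forces all its fibres to be equinumerous, so the distribution of $\#C_f(\FF_q)$ over $R_d(\FF_q)$ is pushed forward to the uniform distribution on $\FF_q^{q^2+q+1}$ via $L$. I would expect the main (minor) obstacle to be the careful verification that $\#C_f(\FF_q)$ genuinely factors through $L$ — i.e.\ that it is well defined independently of the chosen affine lifts and is governed entirely by which coordinates of $L(f)$ vanish — since this is the point at which the projective-versus-affine distinction must be handled; once that is in place the proof is immediate.
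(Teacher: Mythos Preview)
Your proposal is correct and follows essentially the same approach as the paper: both use Lemma~\ref{lemma:freeres} to conclude that $L$ is surjective for $d \geq 2q-1$, deduce that all fibres of $L$ have cardinality $\#\ker L$, and then identify the proportion of $f$ with $\#C_f(\FF_q)=n$ with the binomial count of vectors in $\FF_q^{q^2+q+1}$ having exactly $n$ zero coordinates. Your write-up is simply a more explicit version of the paper's terse argument, including the (harmless) check that the point count factors through $L$.
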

\begin{proof}
  Lemma~\ref{lemma:freeres} shows that under the given hypothesis on $d$ we can
  always find a homogeneous polynomial with a prescribed zero set of cardinality
  $n$ and given non-zero values on the complement. Hence, the cardinality of the
  set of such  polynomials is the order of the kernel of $L$, and does not
  depend on the prescribed zero set. This implies that the proportion of
  polynomials with a zero set of cardinality $n$ follows a binomial distribution
  $\Prob(B_1 + \ldots + B_{q^2 + q + 1} = n)$.
\end{proof}

For a (possibly singular) plane curve $C_f$, we let $t = q+1-\# C_f
(\FF_q)$, and we define the \emph{normalized trace} $x := t / \sqrt{q}$ in
analogy with Section~\ref{sec:bino}. Note that $x$ is not bounded as $q \to
\infty$. Let
\[
  N_{q,d}(x) 
  := 
  \sqrt{q} \cdot \frac{\# \{ f \in R_d : \# C_f(\FF_q) = q+1-\sqrt{q} x\}}{\# R_d}
  \ ;
\]
we want to approximate the difference 
\[
  V_{q,d}(x) := \sqrt{q} \cdot (N_{q,d}(x) - N_{q,d}(-x))
\]
(note the normalization factor $\sqrt{q}$ which appears once in
$N_{q,d}(x)$ and once in $V_{q,d}$).


\begin{corollary} \label{cor:diffsingular}
  For $d \geq 2q-1$, any $\alpha>0$, and $x \in
  [-(\sqrt{q})^{1-\alpha},(\sqrt{q})^{1-\alpha}]$, we get the following
  approximation of $V_{q,d}(x)$ (in the sense of
  Definition~\ref{def:ofamily}):
  \[
    V_{q,d}(x)
    = 
    \frac{1}{3 \sqrt{2 \pi}} x (3-x^2) \cdot e^{-x^2/2} 
    + 
    O\left(\frac{1}{\sqrt{q}}\right)
    \ .
  \]
\end{corollary}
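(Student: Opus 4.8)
The plan is to deduce Corollary~\ref{cor:diffsingular} from Corollary~\ref{cor:binom} by specializing the abstract binomial parameters to the arithmetic ones listed in case~(1) of the discussion following Corollary~\ref{cor:binom}, namely $\sigma = \sqrt{q - 1/q^2} = \sqrt q + O(q^{-5/2})$, $\mu_\sigma = 1/q$, $N_\sigma = q^2+q+1$, and $E_\sigma = (q^2+q+1)/q = \sigma^2 + 1 + O(1/\sigma^2)$, so that the hypothesis of Proposition~\ref{pro:lb} is satisfied. First I would record the identification of $N_{q,d}(x)$ with a value of the interpolated binomial mass function: by Proposition~\ref{prop:bucurplus}, for $d \geq 2q-1$ the proportion $\#\{f \in R_d : \#C_f(\FF_q) = n\}/\#R_d$ equals $\Prob(B_1 + \dots + B_{q^2+q+1} = n) = b_\sigma(n)$, and when $n = q+1-\sqrt q\,x$ is an integer this is $b_\sigma(m(\sigma,x'))$ for the shifted variable $x'$ determined by $E_\sigma - \sigma x' = q+1-\sqrt q\,x$. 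Thus $N_{q,d}(x) = \sqrt q\, b_\sigma(m(\sigma,x')) = \sqrt q\, b(\sigma,x')$.

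Next I would reconcile the two normalizations. The arithmetic normalization uses $q+1-\sqrt q\,x$ rather than $E_\sigma - \sigma x$; since $E_\sigma = q+1 + O(1/q)$ and $\sigma = \sqrt q + O(q^{-5/2})$, one has $x' = x + O(1/\sqrt q)$ uniformly for $x$ in the stated interval $[-(\sqrt q)^{1-\alpha}, (\sqrt q)^{1-\alpha}]$, which corresponds to $I_\sigma$ up to the same negligible reparametrization. The paragraph after Corollary~\ref{cor:binom} already observes that replacing $x$ by $x + O(1/\sigma)$ leaves the expansion of $b(\sigma,x)-b(\sigma,-x)$ unchanged modulo $O(1/\sigma^3)$, because the main term $x(x^2-3)e^{-x^2/2}/\sigma^2$ is Lipschitz on $I_\sigma$ with a controlled constant; so I would invoke that remark to pass freely between $x$ and $x'$. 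Then
\[
  V_{q,d}(x) = q\bigl(b(\sigma,x') - b(\sigma,-x')\bigr)
  = q\left(\frac{1}{3\sqrt{2\pi}} x(3-x^2)e^{-x^2/2}\cdot\frac{1}{\sigma^2} + O\!\left(\frac{1}{\sigma^3}\right)\right),
\]
and since $q/\sigma^2 = q/(q - 1/q^2) = 1 + O(1/q^3)$ and $q/\sigma^3 = O(1/\sqrt q)$, the main term becomes $\frac{1}{3\sqrt{2\pi}} x(3-x^2)e^{-x^2/2}$ up to $O(1/\sqrt q)$, and the error term is absorbed into $O(1/\sqrt q)$, giving the claim. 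One should double-check that the correction from $q/\sigma^2 = 1 + O(1/q^3)$ multiplied against the bounded main term $x(3-x^2)e^{-x^2/2}$ (bounded because $x$ ranges over $I_\sigma$ and $u(3-u^2)e^{-u^2/2}$ is bounded on all of $\RR$) stays within $O(1/\sqrt q)$, which is immediate.

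The main obstacle, such as it is, is purely bookkeeping: one must be careful that all the $o$- and $O$-statements are understood in the uniform sense of Definition~\ref{def:ofamily}, i.e. uniformly over $x$ in the shrinking-relative-to-$\sigma$ interval, and that the reindexing $x \mapsto x'$ genuinely maps the interval $[-(\sqrt q)^{1-\alpha}, (\sqrt q)^{1-\alpha}]$ into (a set comparable to) $I_\sigma = [-\sigma^{1-\alpha}, \sigma^{1-\alpha}]$ after the $O(q^{-5/2})$ perturbation of $\sigma$ — for this it suffices to note that $(\sqrt q)^{1-\alpha}$ and $\sigma^{1-\alpha}$ differ by a factor $1 + O(q^{-3})$, so one can shrink $\alpha$ infinitesimally if needed, or simply absorb the discrepancy. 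No genuinely new analytic input beyond Corollary~\ref{cor:binom} and Proposition~\ref{prop:bucurplus} is required.
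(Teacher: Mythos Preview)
Your proposal is correct and follows the same route as the paper: the paper's own proof is the single sentence ``This follows from Proposition~\ref{prop:bucurplus} and Section~\ref{sec:bino},'' and what you have written is precisely a careful unpacking of that sentence, invoking Proposition~\ref{prop:bucurplus} for the binomial identification, Corollary~\ref{cor:binom} with the case~(1) parameters for the asymptotic, and the paragraph after Corollary~\ref{cor:binom} for the passage between the normalizations $E_\sigma-\sigma x$ and $q+1-\sqrt q\,x$. No alternative strategy is involved.
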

\begin{proof}
    This follows from Proposition~\ref{prop:bucurplus} and Section~\ref{sec:bino}.
\end{proof}

We now restrict our considerations to nonsingular curves. Let $R_d^{\textrm{ns}}
\subset R_d$ be the subset of homogeneous polynomials corresponding to
nonsingular plane curves. 
Since we have restricted to a further subset defined by the non-vanishing of the
rather complicated discriminant form, the sieving process to get the
distribution is correspondingly more involved, and no elementary method seems to
apply. Instead we use another result in~\cite{bucur}.

\begin{theorem}[{\cite[Th.1.1]{bucur}}]\label{thm:bucur}
  Let $B_1, \ldots, B_{q^2+q+1}$ be i.i.d.~Bernoulli random variables
  taking
  the value $1$ with probability $(q + 1)/(q^2 + q + 1)$. 
  If $0 \leq n \leq q^2+q+1$, then
  \begin{align*}
    \frac{\# \{ f \in R_d^{\textrm{ns}} : \#C_f(\FF_q) = n\}}{\#
    R_d^{\textrm{ns}}} ={} & \Prob(B_1 + \ldots + B_{q^2+q+1} = n) \\
                         & {}\times\left(1 + O\left(q^n \left(d^{-1/3} + (d-1)^2 q^{-\min (\lfloor
      \frac{d}{p} \rfloor +1,\frac{d}{3})} + d q^{-\lfloor \frac{d-1}{p} \rfloor
      -1}\right)\right)\right)
      \ .
  \end{align*} 
\end{theorem}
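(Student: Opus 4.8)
The plan is to prove this by the closed-point sieve of Poonen~\cite{poonen-bertini}, made quantitative as in~\cite{bucur}. Fix a subset $S \subseteq \PP^2(\FF_q)$ with $\#S = n$; the goal is to count the forms $f \in R_d^{\textrm{ns}}$ with $C_f(\FF_q) = S$ and then to sum over the $\binom{q^2+q+1}{n}$ choices of $S$. The defining conditions on $f$ are local at the closed points of $\PP^2$: (i) for each $P \in S$, the curve $C_f$ passes through $P$ and is nonsingular there; (ii) for each $P \in \PP^2(\FF_q) \setminus S$, the curve $C_f$ misses $P$; (iii) for each closed point $P$ of degree $\geq 2$, the curve $C_f$ is nonsingular at $P$. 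Each of these depends only on the image of $f$ in $\mathcal{O}_{\PP^2,P}/\mathfrak{m}_P^2$, so Poonen's sieve expresses the density (as $d \to \infty$) of the $f$ satisfying all of them as the product over all closed points of the corresponding local densities, and likewise expresses $\#R_d^{\textrm{ns}}/\#R_d$ itself as $\prod_P (1 - q^{-3\deg P})$.

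Next I would compute the local densities by Taylor expansion. At a closed point $P$ of degree $e$, write $f$ in local coordinates as $a_0 + (\text{linear part}) + \cdots$ with $a_0 \in \FF_{q^e}$ and the linear part ranging over a $2$-dimensional $\FF_{q^e}$-vector space; for $f$ uniform in $R_d$ with $d$ large these Taylor coefficients are independent and uniform. The curve $C_f$ is singular at $P$ exactly when $a_0$ and the two linear coefficients all vanish, which has density $q^{-3e}$, while passing through $P$ has density $q^{-e}$. Hence the local density for condition (iii) is $1 - q^{-3e}$, that for (ii) is $1 - q^{-1}$ (here $e = 1$), and that for (i) is $q^{-1}(1 - q^{-2}) = (q^2-1)/q^3$. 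Dividing the product for the constrained count by the product for $\#R_d^{\textrm{ns}}$, the contributions of all points of degree $\geq 2$ cancel, and one is left with
\[
  \left(\frac{q^{-1}(1-q^{-2})}{1-q^{-3}}\right)^{\!n}\left(\frac{1-q^{-1}}{1-q^{-3}}\right)^{\!q^2+q+1-n}
  = p^{\,n}\,(1-p)^{\,q^2+q+1-n}\ ,\qquad p := \frac{q+1}{q^2+q+1}\ ,
\]
using $q^{-1}(1-q^{-2})/(1-q^{-3}) = (q^2-1)/(q^3-1) = (q+1)/(q^2+q+1)$ and $(1-q^{-1})/(1-q^{-3}) = q^2/(q^2+q+1) = 1-p$. (As a sanity check, the expected number of rational points on a random smooth plane curve of large degree is then $(q^2+q+1)p = q+1$, consistent with the trace of Frobenius averaging to $0$.) Summing over the $\binom{q^2+q+1}{n}$ subsets $S$ yields exactly $\Prob(B_1 + \cdots + B_{q^2+q+1} = n)$, which is the main term.

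The main obstacle is the explicit error term: Poonen's density statement is purely asymptotic, whereas here one needs a uniform rate of convergence. Following~\cite{bucur} I would split the closed points into three ranges. For points of degree at most some cutoff $r$, the simultaneous evaluation map $R_d \to \prod_{\deg P \leq r} \mathcal{O}_{\PP^2,P}/\mathfrak{m}_P^2$ is surjective once $d$ is large enough — a Chinese-remainder statement of the same flavor as Lemma~\ref{lemma:freeres} — so this part of the product is computed exactly. For points of "medium" degree between $r$ and roughly $\min(\lfloor d/p \rfloor,\, d/3)$ one bounds directly the number of $f$ singular at such a point, which contributes a term of size about $(d-1)^2 q^{-\min(\lfloor d/p\rfloor + 1,\, d/3)}$; the characteristic-$p$ quantity $\lfloor d/p \rfloor$ enters here because differentiation need not lower the degree in characteristic $p$. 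For points of high degree one uses Poonen's device of expressing a form singular at such a point through $p$-th powers of its partial derivatives, giving the term $d\, q^{-\lfloor (d-1)/p \rfloor - 1}$. Optimizing the cutoff $r$ between the exact low-degree part and these tail estimates produces the remaining $d^{-1/3}$. The delicate point is to make every implied constant uniform in $q$ and independent of $S$, and to track how the multiplicative error propagates through the ratio of products above: each of the $n$ conditions at points of $S$ contributes a local factor of order $1/q$, so a relative error $\varepsilon$ at each such point is amplified to $O(q^n \varepsilon)$ in the final estimate — precisely the shape of the error term claimed.
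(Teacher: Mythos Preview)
The paper does not give its own proof of this statement: it is quoted verbatim from~\cite[Th.~1.1]{bucur} and used as a black box (only the subsequent Corollary~\ref{cor:diffsmooth} is argued in the paper). Your outline is a faithful sketch of the argument in~\cite{bucur} via Poonen's closed-point sieve, with the correct local densities and the correct three-range decomposition of the error, so there is nothing further to compare.
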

As before, we let
\[
  N^{\textrm{ns}}_{q,d}(x) 
  :=
  \sqrt{q} \cdot \frac{\# \{ f \in R_d^{\textrm{ns}} : \#C_f(\FF_q) =
  q + 1 - \sqrt{q} x\}}{\# R_d^{\textrm{ns}}}
  \ ;
\]
we want to analyze the difference
\[
  V^{\textrm{ns}}_{q,d}(x) 
  := 
 \sqrt{q} \cdot  (N^{\textrm{ns}}_{q,d}(x) - N^{\textrm{ns}}_{q,d}(-x))
  \ .
\]

Using our Lemma, we see that Theorem~\ref{thm:bucur} implies in particular that
the same asympotic formula from Corollary~\ref{cor:diffsingular} holds for
smooth curves, except that the lower bound on $d$ is now doubly exponential
instead of linear in $q$. More precisely, we have the following.

\begin{corollary}\label{cor:diffsmooth}
  For $d \geq q^{3\,(q^2+q+2)}$,  any $\alpha>0$ and $x \in
  [-(\sqrt{q})^{1-\alpha},(\sqrt{q})^{1-\alpha}]$, we get the following
  approximation of $V^{\textrm{ns}}_{q,d}(x)$  (in the sense of
  Definition~\ref{def:ofamily}):
  \[
    V^{\textrm{ns}}_{q,d}(x) = \frac{1}{3 \sqrt{2 \pi}} x (3-x^2) \cdot
    e^{-x^2 / 2}  + O\left(\frac{1}{\sqrt{q}}\right)\ .
  \]
\end{corollary}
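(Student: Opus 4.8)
The plan is to imitate the proof of Corollary~\ref{cor:diffsingular}, but with Theorem~\ref{thm:bucur} in place of Proposition~\ref{prop:bucurplus} and with the parameters $\sigma = \sqrt{q(1 - 1/(q^2+q+1))}$, $\mu_\sigma = (q+1)/(q^2+q+1)$, $E_\sigma = q+1 = \sigma^2 + 1 + O(1/\sigma^2)$ recorded for this corollary in Section~\ref{sec:bino}, so that the hypotheses of Proposition~\ref{pro:lb} are met. The only genuinely new point, compared with Corollary~\ref{cor:diffsingular}, is that Theorem~\ref{thm:bucur} carries a multiplicative error term, and one must show that it contributes only $O(1/\sqrt q)$ to $V^{\textrm{ns}}_{q,d}(x)$; this is exactly where the doubly exponential bound $d \geq q^{3(q^2+q+2)}$ enters.

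First I would fix $x$ with $|x| \leq (\sqrt q)^{1-\alpha}$ and $n := q + 1 - \sqrt q\, x \in \ZZ_{\geq 0}$, and put $n' := q + 1 + \sqrt q\, x$; for $q$ large, both $n$ and $n'$ lie in $[0, q^2+q+1]$ since $|\sqrt q\, x| \leq q^{1-\alpha/2} < q$. Writing $b_\sigma(m) := \Prob(B_1 + \cdots + B_{q^2+q+1} = m)$ for the Bernoulli variables of Theorem~\ref{thm:bucur}, and $\tilde x := (q + 1 - n)/\sigma = \sqrt q\, x/\sigma$, one has $m(\sigma, \tilde x) = n$ and $m(\sigma, -\tilde x) = n'$, both integers, so that $b_\sigma(n) = b(\sigma, \tilde x)$ and $b_\sigma(n') = b(\sigma, -\tilde x)$ by~\eqref{eq:bdef}. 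Since $\sigma = \sqrt q\,(1 + O(1/q^2))$, one has $\tilde x = x + O(1/\sqrt q)$ uniformly on the interval, and $\tilde x \in [-\sigma^{1-\alpha'}, \sigma^{1-\alpha'}]$ for any fixed $\alpha' \in (0, \alpha)$ once $q$ is large; this passage from $x$ to $\tilde x$ is the harmless reparametrization already discussed in Section~\ref{sec:bino}.

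Next I would apply Theorem~\ref{thm:bucur} to $N^{\textrm{ns}}_{q,d}(x)$ and $N^{\textrm{ns}}_{q,d}(-x)$ and expand the product, obtaining
\[
  V^{\textrm{ns}}_{q,d}(x)
  = q\bigl(b(\sigma, \tilde x) - b(\sigma, -\tilde x)\bigr)
  + q\, b(\sigma, \tilde x)\, O(\varepsilon_n)
  - q\, b(\sigma, -\tilde x)\, O(\varepsilon_{n'})\ ,
\]
where $\varepsilon_m := q^m\bigl(d^{-1/3} + (d-1)^2 q^{-\min(\lfloor d/p\rfloor + 1,\, d/3)} + d\, q^{-\lfloor (d-1)/p\rfloor - 1}\bigr)$ and the implied constants are absolute. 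For the first (main) term, Corollary~\ref{cor:binom}, applied with $\alpha'$, gives $b(\sigma, \tilde x) - b(\sigma, -\tilde x) = \frac{1}{3\sqrt{2\pi}}\tilde x(3 - \tilde x^2)e^{-\tilde x^2/2}\,\sigma^{-2} + O(\sigma^{-3})$. Multiplying by $q$ and using $q/\sigma^2 = 1 + O(1/q^2)$, $q\cdot O(\sigma^{-3}) = O(1/\sqrt q)$, the boundedness of $x \mapsto x(3 - x^2)e^{-x^2/2}$ on $\RR$ together with the boundedness of its derivative $(x^4 - 6x^2 + 3)e^{-x^2/2}$ (which lets me pass from $\tilde x$ to $x$ at the cost of an $O(1/\sqrt q)$ term, again as in Section~\ref{sec:bino}), I obtain $q\bigl(b(\sigma, \tilde x) - b(\sigma, -\tilde x)\bigr) = \frac{1}{3\sqrt{2\pi}}x(3 - x^2)e^{-x^2/2} + O(1/\sqrt q)$.

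The hard part is the two error terms. By Proposition~\ref{pro:lb}, $b(\sigma, \pm\tilde x) = O(1/\sigma) = O(1/\sqrt q)$ uniformly, so it suffices to prove $\varepsilon_n = \varepsilon_{n'} = O(1/q)$ uniformly in $x$; then $q\, b(\sigma, \tilde x)\, O(\varepsilon_n) = O(q\cdot q^{-1/2}\cdot q^{-1}) = O(1/\sqrt q)$, and likewise for the $n'$ term. Here I would invoke the trivial bound $n, n' \leq \#\PP^2(\FF_q) = q^2 + q + 1$: it gives $q^n d^{-1/3} \leq q^{q^2+q+1}\bigl(q^{3(q^2+q+2)}\bigr)^{-1/3} = q^{-1}$, which controls the only summand of $\varepsilon_n$ decaying merely polynomially in $d$. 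For the other two summands, $\lfloor d/p\rfloor + 1 \geq d/p$ and $\min(\lfloor d/p\rfloor + 1,\, d/3) \geq d/\max(p, 3)$, so each of them is at most $q^{q^2+q+1}$ times a quantity bounded by $(\text{polynomial in }d)\cdot q^{-cd}$ with $c = c(p) > 0$ fixed; since $d \geq q^{3(q^2+q+2)}$ is doubly exponential in $q$, this is $\leq q^{-1}$ for $q$ large, uniformly over all admissible $d$. Combining the two parts proves the corollary. I expect this last estimate — taming the $q^n$ growth in the Bucur--David--Feigon--Lal\'in error term — to be the real obstacle, and it is precisely what forces the doubly exponential lower bound on $d$, in contrast to the linear bound $d \geq 2q - 1$ available for Corollary~\ref{cor:diffsingular} via the exact binomial law of Proposition~\ref{prop:bucurplus}.
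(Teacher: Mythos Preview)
Your proof is correct and follows the same approach as the paper: invoke Theorem~\ref{thm:bucur}, apply the binomial asymptotics of Section~\ref{sec:bino} (Proposition~\ref{pro:lb} and Corollary~\ref{cor:binom}) with the smooth-curve parameters, and check that the multiplicative error term from Theorem~\ref{thm:bucur} contributes only $O(1/\sqrt q)$ under the hypothesis $d\geq q^{3(q^2+q+2)}$. Your treatment is in fact more explicit than the paper's one-line proof---in particular, your observation that one needs $\varepsilon_n=O(1/q)$ (not merely $O(1/\sqrt q)$) and your verification via $q^{q^2+q+1}d^{-1/3}\leq q^{-1}$ make the role of the doubly exponential bound transparent.
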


\begin{proof}
  When $d \geq q^{3\,(q^2+q+2)}$ the $O()$ term in Theorem~\ref{thm:bucur} is
  $O(q^{-1/2})$; so applying Theorem~\ref{thm:bucur} and Section~\ref{sec:bino}
  yields the result.
\end{proof}

\section{Experimental results and their limitations}
\label{sec:comp}

We consider the special case $d = 4$. The smooth plane quartic curves $C_f$
defined by $f \in R_4^{\textrm{ns}}(\FF_q)$ are precisely the non-hyperelliptic
curves of genus $3$ over $\FF_q$. Since $d$ is now fixed, our previous results
cannot be applied directly. However, as mentioned in the introduction, we wish
to compare the statistical distributions obtained in this way with the
experimental results obtained in~\cite{LRRS14}.

In order to do so, let us recall the notation of~\cite{LRRS14}. Let
$\nn_{q,3}(t)$ denote the number of $\FF_q$-isomorphism classes of
non-hyperelliptic curves of genus $3$ over $\FF_q$ of trace $t$, weighted by the
order of their $\FF_q$-automorphism group:
\begin{equation}\label{eq:weightedsumdef}
  \nn_{q,3}(t) := \sum_{\left\lbrace \substack{C/\FF_q \, \textrm{n.h.\ genus
  3}\\ \textrm{curve with trace} \; t } \right\rbrace /\simeq} \frac{1}{\#
  \Aut_{\FF_q}(C)}.
\end{equation}
This is the most natural way to define $\nn_{q,3}(t)$
(\cf~\cite{behrend,VdG92}). In order to compare our results for different
values of $q$, we renormalize \(\nn_{q,3}(t)\) to
\[ 
  \nn^{\textrm{KS}}_{q,3} (x) 
  := 
  \frac{\sqrt{q}}{q^6 + 1} \nn_{q,3} (t)
  \ 
  \text{ where }
  t := \lfloor \sqrt{q} x \rfloor
  \text{ for }
  x \in [-6,6] .
\]
This coincides with the normalization of the trace distribution in
Katz--Sarnak~\cite{katz-sarnak} (the factor $q^6+1$ being the number of
$\FF_q$-points of the moduli space of non hyperelliptic genus $3$ curves). The
numerical results in~\cite{LRRS14} on the quantity $\nn^{\textrm{KS}}_{q,3}(x)$
and the difference
\[
  \vv_{q,3}^{\textrm{KS}} (x) :=
  \sqrt{q}\,(\nn^{\textrm{KS}}_{q,3} (x) - \nn^{\textrm{KS}}_{q,3} (-x))
\]
are graphically summarized in Figure~\ref{fig:3}.\\

\begin{figure}[htbp]\label{fig:KSref}
  \centering
  \subcaptionbox%
  {Graph of $\nn^{\textrm{KS}}_{q,3}(x)$ \label{fig:1}}%
  {\includegraphics[height=6cm]{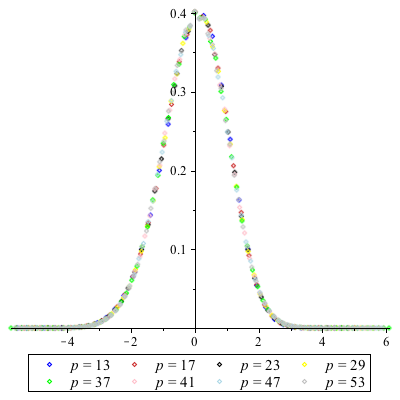}}
  \subcaptionbox%
  {Linear interpolation of $\vv_{q,3}^{\textrm{KS}}(x)$
  \label{fig:2}}%
  {\includegraphics[height=6cm]{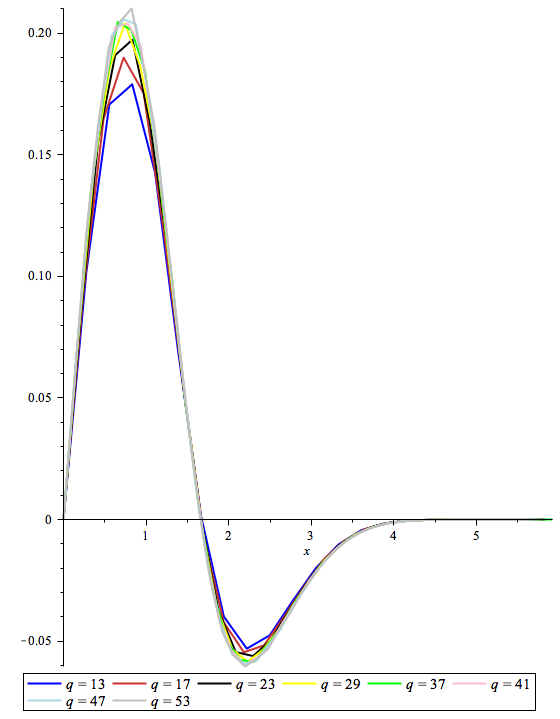}}
  \caption{The numerical trace distributions $\nn^{\textrm{KS}}_{q,3}(x)$ and
    $\vv^{\textrm{KS}}_{q,3}(x)$.}
  \label{fig:3}
\end{figure}

In order to apply our previous results, we need to understand the connection
between~$N_{q,4}^{\textrm{ns}}(x)$ and~$\nn^{\textrm{KS}}_{q,3}(x)$.

\begin{lemma}\label{lem:stacky}
  With $\nn_{q,3}(t)$ defined as above, we have 
  \[
    \# \GL_3(\FF_q)\cdot  \nn_{q,3}(t) 
    = 
    \# \{ f \in R_4^{\textrm{ns}} : \#C_f(\FF_q) = q+1-t\}
    \ .
  \]
\end{lemma}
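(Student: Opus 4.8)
The plan is to set up the natural action of $\GL_3(\FF_q)$ on the set of smooth quartic forms and identify the stabilizers and orbits precisely. First I would observe that $\GL_3(\FF_q)$ acts on $R_4$ by linear substitution of the variables $x,y,z$, and that this action preserves the subset $R_4^{\textrm{ns}}$ of forms defining smooth plane curves, since a linear change of coordinates takes a smooth curve to a smooth curve. Moreover this action preserves the point count: if $g \in \GL_3(\FF_q)$ and $f \in R_4^{\textrm{ns}}$, then $C_{f \circ g}$ is the image of $C_f$ under the induced automorphism of $\PP^2(\FF_q)$, so $\# C_{f\circ g}(\FF_q) = \# C_f(\FF_q)$. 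Hence the subset $\{ f \in R_4^{\textrm{ns}} : \# C_f(\FF_q) = q + 1 - t \}$ is a union of $\GL_3(\FF_q)$-orbits, and it suffices to analyze a single orbit.

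Next I would decompose the orbit-counting into two stages, via the surjection $R_4^{\textrm{ns}} \to \{$smooth plane quartics in $\PP^2$ over $\FF_q\}$ sending $f$ to $C_f$: two forms $f, f'$ have the same zero locus if and only if $f' = \lambda f$ for some $\lambda \in \FF_q^\times$, since a quartic form is determined up to scalar by the curve it cuts out (the curve being reduced of degree $4$). So the fibre over each curve $C$ has exactly $q - 1$ elements. On the other hand, two smooth plane quartics $C, C'$ are $\GL_3(\FF_q)$-equivalent (equivalently $\PGL_3(\FF_q)$-equivalent) precisely when they are isomorphic as curves over $\FF_q$, because every isomorphism between smooth plane quartics extends to an automorphism of $\PP^2$ — this is the standard fact that the canonical embedding of a non-hyperelliptic genus $3$ curve is intrinsic, so $\Isom_{\FF_q}(C, C')$ is realized by elements of $\PGL_3(\FF_q)$. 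Therefore, fixing an orbit of curves corresponding to an $\FF_q$-isomorphism class $[C]$, the stabilizer of $C$ in $\PGL_3(\FF_q)$ is exactly $\Aut_{\FF_q}(C)$.

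Combining these, I would count the orbit of a given $f$ with $\# C_f(\FF_q) = q+1-t$ by the orbit-stabilizer theorem: the $\GL_3(\FF_q)$-orbit of $f$ has size $\# \GL_3(\FF_q) / \# \mathrm{Stab}_{\GL_3}(f)$, where the stabilizer of $f$ (not just of $C_f$) sits in an exact sequence relating it to $\Aut_{\FF_q}(C_f) \subset \PGL_3(\FF_q)$; tracking the scalars carefully, the elements of $\GL_3(\FF_q)$ fixing the form $f$ exactly are those mapping to $\Aut_{\FF_q}(C_f)$ and acting trivially on $f$, and a short computation shows $\# \mathrm{Stab}_{\GL_3}(f) = \#\Aut_{\FF_q}(C_f) \cdot (q-1) / (q-1) = \# \Aut_{\FF_q}(C_f)$ once one accounts for the $(q-1)$ forms per curve and the $(q-1)$ scalar matrices. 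Summing the orbit sizes over all isomorphism classes of trace $t$ then gives
\[
  \# \{ f \in R_4^{\textrm{ns}} : \# C_f(\FF_q) = q+1-t\}
  = \sum_{[C] \text{ trace } t} \frac{\# \GL_3(\FF_q)}{\# \Aut_{\FF_q}(C)}
  = \# \GL_3(\FF_q) \cdot \nn_{q,3}(t),
\]
which is the claim.

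The main obstacle is the careful bookkeeping of scalars in the two-step passage from forms to curves to isomorphism classes, i.e.\ making sure the factors of $(q-1)$ from $\FF_q^\times$-scaling of forms and from the center $\FF_q^\times \cdot I_3 \subset \GL_3(\FF_q)$ cancel correctly in the orbit-stabilizer count, so that the weight $1/\#\Aut_{\FF_q}(C)$ emerges with no spurious powers of $q-1$. The only genuinely non-formal input is the fact that every $\FF_q$-isomorphism of smooth plane quartics is induced by a coordinate change on $\PP^2$, which relies on the canonical nature of the plane embedding for non-hyperelliptic genus $3$ curves; I would cite this and otherwise the argument is pure orbit counting.
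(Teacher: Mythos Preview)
Your approach is essentially the paper's: both use that isomorphisms of smooth plane quartics are induced by $\PGL_3(\FF_q)$ via the canonical embedding, that forms are determined by their curves up to $\FF_q^\times$, and then count by orbit--stabilizer. The paper simply writes
\[
\#\{f\} \;=\; \#\FF_q^\times \sum_{[C]} \frac{\#\PGL_3(\FF_q)}{\#\Aut_{\FF_q}(C)} \;=\; \#\GL_3(\FF_q)\sum_{[C]}\frac{1}{\#\Aut_{\FF_q}(C)},
\]
applying orbit--stabilizer to $\PGL_3$ acting on \emph{curves} and then multiplying by $q-1$ for the forms over each curve.

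There is, however, a genuine slip in your scalar bookkeeping, precisely at the point you flagged as the main obstacle. The assertion $\#\mathrm{Stab}_{\GL_3}(f)=\#\Aut_{\FF_q}(C_f)$ is false in general. The preimage $G\subset\GL_3(\FF_q)$ of $\Aut_{\FF_q}(C_f)$ acts on $\FF_q\cdot f$ through a character $\chi:G\to\FF_q^\times$, and $\mathrm{Stab}_{\GL_3}(f)=\ker\chi$ has order $(q-1)\,\#\Aut_{\FF_q}(C_f)/\#\mathrm{im}\,\chi$; your computation tacitly assumes $\chi$ is surjective. On scalars $cI$ one has $\chi(cI)=c^4$, so for instance when $q\equiv 1\pmod 4$ and $\Aut_{\FF_q}(C_f)=\{1\}$ the stabilizer has order $\gcd(4,q-1)>1$, not $1$. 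Correspondingly, a single $\FF_q$-isomorphism class of curves can split into several $\GL_3$-orbits of forms, so ``summing orbit sizes over isomorphism classes'' does not literally work as you wrote it. The fix is exactly the paper's: apply orbit--stabilizer to $\PGL_3$ acting on curves in $\PP^2$ (where the stabilizer really is $\Aut_{\FF_q}(C)$), and only afterwards multiply by $q-1$ to pass to forms. Then the two factors of $q-1$ combine to give $\#\GL_3(\FF_q)$ with no fourth-power subtleties.
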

\begin{proof}
  As smooth plane quartics are isomorphic to their canonical embeddings, an
  $\FF_q$-rational isomorphism between two quartics is induced by an element of
  $\PGL_3 (\FF_q)$. But on the other hand, two ternary forms define the same
  subvariety of $\PP^2$ if and only if they differ by scalar multiplication by
  an element of $\FF_q^{\times}$, so
  \begin{align*}
    \begin{split}
      \# \{ f \in R_4^{\textrm{ns}} : \# C_f(\FF_q) = q + 1 - t \}
      & = \# \FF_q^{\times} \sum_{\{ C / \FF_q \} / \simeq} \frac{\#
        \PGL_3(\FF_q)}{\# \Aut_{\FF_q}(C)} \\
      & = \# \GL_3(\FF_q) \sum_{\{ C / \FF_q\} / \simeq} \frac{1}{\#
        \Aut_{\FF_q}(C)} \\
      & = \# \GL_3(\FF_q) \cdot \nn_{q,3}(t) ,
    \end{split}
  \end{align*}
  where the sums are taken over the same set of curves as in
  Equation~\eqref{eq:weightedsumdef}.
\end{proof}

By Lemma~\ref{lem:stacky},
\[
  \nn^{\textrm{KS}}_{q,3}(x) = \frac{\# R_4^{\textrm{ns}}}{(q^6 + 1) \#
  \GL_3(\FF_q)} N_{q,4}^{\textrm{ns}}(x).
\]
The singular quartic curves in the $14$-dimensional space $\PP R_4$ are
contained in the discriminant locus $D$, a hypersurface of degree $27$. By
\cite[Th.2.1]{sorensen}, we have $\# D(\FF_q) \leq 27 q^{13} +
\frac{q^{13}-1}{q-1}$, so
\[
  \frac{\# R_4^{\textrm{ns}}}{(q^6+1) \# \GL_3(\FF_q)}
  = \frac{\# R_4 - (q-1) \# D(\FF_q)}{(q^6 + 1) (q^3 - 1) (q^3 - q) (q^3 -
    q^2)}
  = 1 + O\left(\frac{1}{q}\right).
\]
Since $\nn^{\textrm{KS}}_{q,3}(x)$ (and therefore  $N_{q,4}^{\textrm{ns}}(x)$)
is uniformly bounded, this implies
\[
  \nn^{\textrm{KS}}_{q,3}(x)
  = N_{q,4}^{\textrm{ns}}(x) \left( 1 + O \left( \frac{1}{q} \right) \right)
  = N_{q,4}^{\textrm{ns}}(x) + O \left( \frac{1}{q} \right)
  \ .
\]

We can now make a graphical comparison of the experimental distribution of
$\vv_{q,3}^{\textrm{KS}}(x)$ for the largest value of $q$ (to wit, $q = 53$)
with the results from Section~\ref{sec:pointcount}. Interpolating the binomial
coefficient as in Equation~\eqref{eq:bdef}, which defined the function $b$ in
Section~\ref{sec:bino}, we define $B_1$, $B_2$, and $B_3$ by
\begin{equation}
  B_i(x) 
  := 
  \sigma_i {N \choose N\mu_i - \sigma_i x} 
  \mu_i^{N\mu_i - \sigma_i x} 
  (1-\mu_i)^{N - (N\mu_i -\sigma_i x)}
  \ ,
\end{equation}
where \(N = q^2 + q + 1\) and
\begin{enumerate}
  \item $\sigma_1 =  \sqrt{q-1/q^2}$ 
    and $\mu_1 = 1/q$
    (corresponding to Corollary~\ref{cor:diffsingular}, \ie, possibly singular
    plane quartics);
  \item $\sigma_2 =  \sqrt{q(1-\frac{1}{q^2+q+1})}$ 
    and $\mu_2 = (q+1)/(q^2+q+1)$ 
    (corresponding to Corollary~\ref{cor:diffsmooth}, \ie, smooth plane
    quartics);
  \item $\sigma_3 =  \sqrt{q}$ and $\mu_3 = 1/q$ (a
    simplified version of these models).
\end{enumerate}

In both cases (1) and (2) we have blithely ignored the constraints on the degree
that were required for the proofs of the corresponding results. Yet as
Figure~\ref{fig:Nvs} shows, the plots of $B_1,B_2,B_3$ and the Gaussian density
function are almost indistinguishable, and all of these functions interpolate
the distribution $\nn^{\textrm{KS}}_{53,3}(x)$ quite well.

\begin{figure}[htbp]
  \centering
  \subcaptionbox%
  {$\nn^{\textrm{KS}}_{53,3}(x)$ against $B_1,B_2,B_3$}  %
  {\includegraphics[height=6cm]{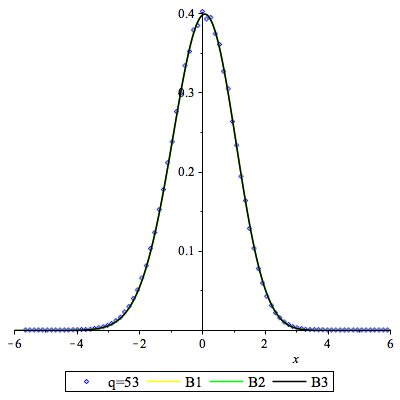}}
  \subcaptionbox%
  {$\nn^{\textrm{KS}}_{53,3}(x)$ against the Gaussian curve} %
  {\includegraphics[height=6cm]{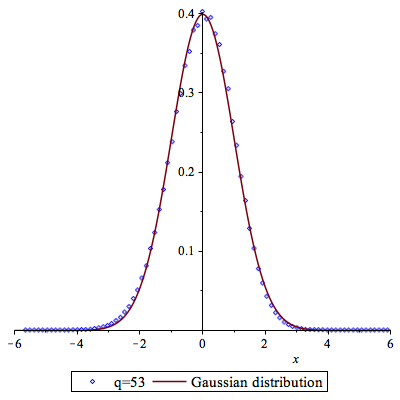}}
  \caption{Comparisons between $\nn^{\textrm{KS}}_{53,3}(x)$ and its
    approximations.}
  \label{fig:Nvs}
\end{figure}

As above, we are led to define
\begin{equation}
  V_i (x) := \sigma\,(B_i (x) - B_i (-x))
  \quad
  \text{for } i = 1,\ 2,\ 3
\end{equation}
and
\begin{equation}
  V^{\textrm{lim}} (x) 
  := 
  \frac{1}{3 \sqrt{2 \pi}} x (3 - x^2) e^{-x^2 / 2} 
  \ .
\end{equation}
This gives rise to the plots in Figure~\ref{fig:Vvs}. 
\begin{figure}[htbp]
  \centering
  \subcaptionbox%
  {$\vv^{\textrm{KS}}_{53,3}(x)$ against $V_1,V_2,V_3$} %
  {\includegraphics[width=6cm]{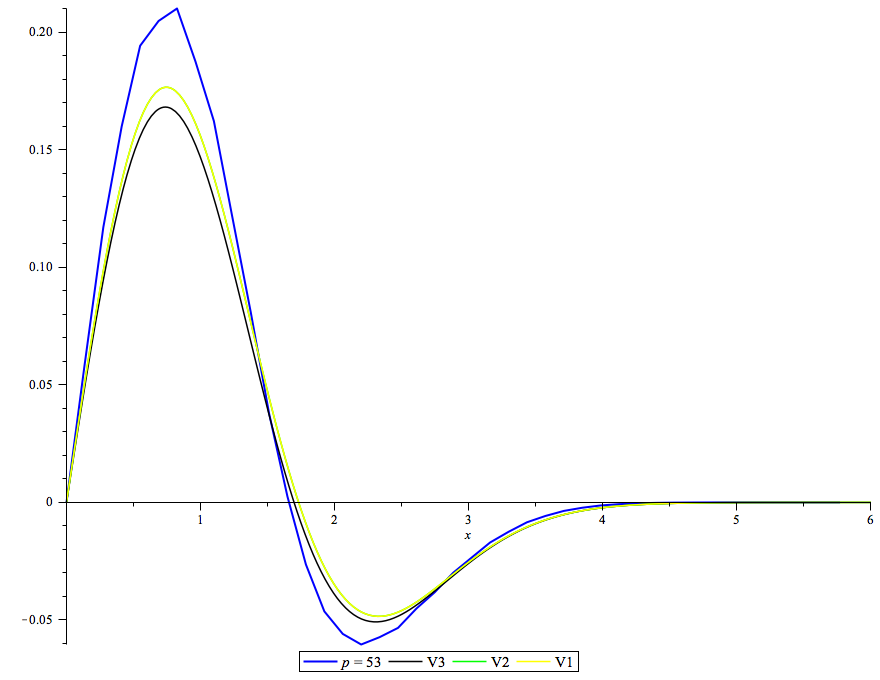}}
  \subcaptionbox%
  {$\vv^{\textrm{KS}}_{53,3}(x)$ against $V_1,V^{\textrm{lim}}$}  %
  {\includegraphics[width=6cm]{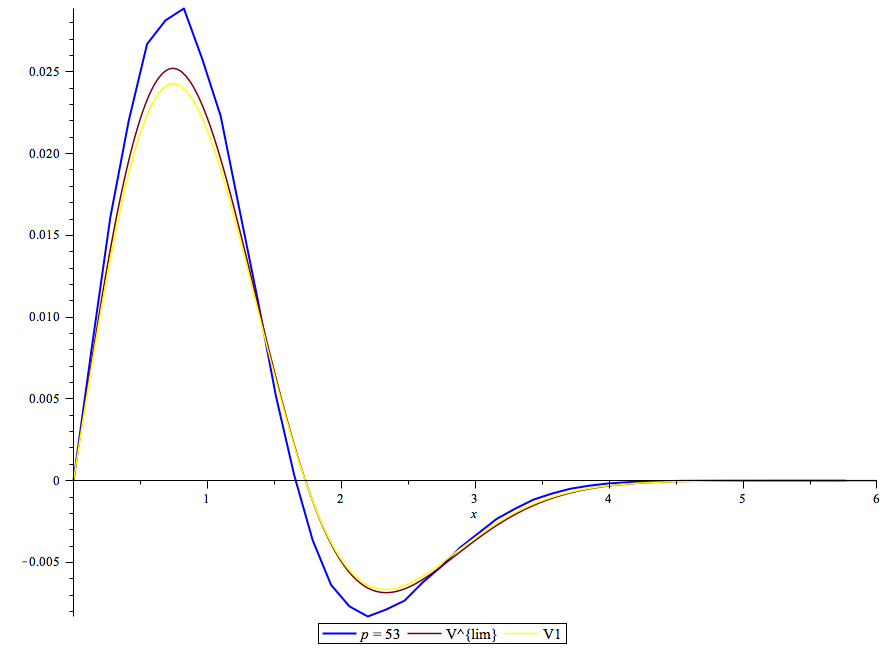}}
  \caption{Comparisons between $\vv^{\textrm{KS}}_{53,3}(x)$ and its
    approximations.}
  \label{fig:Vvs}
\end{figure}
Once more we observe that the various curves closely resemble the distribution
of $\vv^{\textrm{KS}}_{53,3}(x)$. This time around, the agreement is all the
more remarkable in that the limit distribution $V^{\textrm{lim}}$ does not
depend on any parameters, and hence cannot be adjusted.

 \section{Proof of Proposition~\ref{pro:lb}}
\label{sec:lem}

For notational convenience, throughout this proof we write $N$, $E$, $\mu$, $m$,
and $n$ for $N_{\sigma}$, $E_{\sigma}$, $\mu_{\sigma}$, $m({\sigma},x)$ and
$n({\sigma},x)$.

Since $E = \sigma^2 + 1 + O(1/\sigma^2)$ and $x = o(\sigma)$ at worst, we have 
\begin{align*}
    N &= \sigma^4+O(\sigma^2) 
    \ ,
    \\
    \mu &=\frac{1}{\sigma^2}+O\left(\frac{1}{\sigma^4}\right)
    \ ,
    \\
    m &:= E - \sigma x = \sigma^2 - s\,\sigma + 1+O\left(\frac{1}{\sigma^2}\right)
    \ ,
    \\
    n &:= (N-E)+\sigma x = N - m = \sigma^4 + O(\sigma^2)
    \ .
\end{align*}
Stirling's approximation of the Gamma function
\[
\Gamma (z + 1) 
= 
\sqrt{2 \pi z} \left( \frac{z}{e} \right)^z e^{\theta_z}
\quad
\text{with}
\quad 
{|\theta_z|} \leq \frac{1}{12 z}
\]
yields an approximation
\begin{equation}\label{eq:bexp}
  b(\sigma,x) = \frac{1}{\sqrt{2 \pi}} \sqrt{\frac{N}{mn}} \left( \frac{E}{m}
  \right)^m \left( \frac{N -E }{n} \right)^n e^{\theta}  
  \quad
  \text{with}
  \quad
  \theta := \theta_N - \theta_m - \theta_n
  \ .
\end{equation}
Claims 1 through 6 below approximate the individual factors in
Equation~\eqref{eq:bexp}.\bigskip

\noindent
{\bf Claim 1:} 
$\displaystyle\sqrt{\frac{N}{m n}} = \frac{1}{\sigma} \left( 1 + \frac{x}{2
    \sigma} + O \left( \frac{x^2}{\sigma^2} \right) \right)$.
\begin{proof}
  Note that
  \[
    \frac{N}{m n}
    = 
    \frac{1}{\sigma^2} \times \frac{1}{1 + (2\,\mu - 1) \frac{x}{\sigma} - \frac{x^2}{N}}
    = 
    \frac{1}{\sigma^2} \times \frac{1}{1 - (1+O \left( \frac{1}{\sigma^2} \right)) \frac{x}{\sigma} + O \left( \frac{1}{\sigma^2} \right)}
    \ ,
  \]
  and the Taylor expansion of the monotone function $z \mapsto 1/\sqrt{1+z}$ 
  yields Claim 1.
\end{proof}

\noindent
{\bf Claim 2:} 
$\displaystyle e^\theta = 1 + O \left( \frac{1}{\sigma^2} \right)$.
\begin{proof} 
  Estimating $\theta$, we have
  \[
    {|\theta|}
    = 
    {|\theta_N - \theta_m - \theta_n|}
    \leq 
    \frac{1}{12} \left| \frac{1}{N} + \frac{N}{n m} \right|
    \ .
  \]
  From Claim 1 and $N = \sigma^4 + O(\sigma^2)$ we obtain $\theta =
  O(1/\sigma^2)$, which is enough to deduce Claim~2.  
\end{proof}

We will approximate the product of the two middle terms of
Equation~\eqref{eq:bexp} using
\begin{displaymath}
  \left(
    \frac{E}{m}
  \right)^m
  \left(
    \frac{N-E}{m}
  \right)^m
  =
  e^{-(A+B)}
  \quad
  \text{where}
  \quad
  A := - \ln \left( \frac{E}{m} \right)^m 
  \quad
  \text{and}
  \quad
  B := - \ln \left( \frac{N-E}{n} \right)^n 
  \ .
\end{displaymath}
  
\noindent
{\bf Claim 3:} 
$\displaystyle A = -\sigma\,x+\frac{{x}^{2}}{2}+{\frac {{x}^{3}}{6}\,\frac{1}{\sigma}}+O \left( \frac{x^4}{\sigma^{2}} \right)\,.$
\begin{proof}  
  Indeed,
   \begin{align*}
     A = (E - \sigma x) \ln\left(\frac{E - \sigma x}{E}\right) 
     & = E \left(1-\frac{\sigma}{E}\,x\right) \ln\left(1-\frac{\sigma}{E}\,x\right) 
     \\%
     & = 
     E \left(-\frac{\sigma}{E}\,x +
       \frac{1}{2}\left(\frac{\sigma}{E}\,x\right)^2 +
       \frac{1}{6}\left(\frac{\sigma}{E}\,x\right)^3+ O
       \left(\left(\frac{\sigma}{E}\,x\right)^4\right)\right).
  \end{align*}
  The claim follows because
  \( E = \sigma^2 + 1 + O(1/\sigma^2) \)
  and
  \(
    \sigma/E = 1/\sigma - 1/\sigma^3 + O(1/\sigma^5)
  \).
\end{proof}

\noindent
{\bf Claim 4:} $\displaystyle B = \sigma\,x+\frac{{x}^{2}}{2}\,\frac{1}{\sigma^2}+O \left(\frac{x^2}{\sigma^{4}} \right)$.
\begin{proof}
  The proof is analogous to that of Claim 3, using
  \(
    B 
    =
    (N-E) (1+\frac{\sigma}{N-E}\,x )
    \ln(1+\frac{\sigma}{N-E}\,x )
  \)
  and 
  \(
    \sigma/(N-E) = 1/\sigma^3 + O(1/\sigma^5)
  \).
\end{proof}

\noindent
{\bf Claim 5:} 
$\displaystyle A + B = \frac{{x}^{2}}{2} + {\frac {{x}^{3}}{6}\frac{1}{\sigma}} + O \left( \frac{x^4}{\sigma^{2}} \right)$.
\begin{proof}  
  This is immediate from Claims $3$ and $4$.
\end{proof}

\noindent
{\bf Claim 6:} $\displaystyle e^{-(A + B)} = e^{-\frac{x^2}{2}} \left(1 - \frac{x^3}{6 \sigma}\right) + O \left( \frac{1}{\sigma^2} \right)$. 
\begin{proof}
  Taking a first-order Taylor approximation of the monotone
  function $z \mapsto e^z$ shows that 
  \[
    e^z = 1 + z + R (z)  
    \ ,
  \]
  where the remainder term $R$ satisfies $|R (z)| \leq (z^2 / 2) e^{|z|}$. 
  Thus
  \begin{align*}
    \begin{split}
    e^{-(A + B)} & =
    e^{-\frac{x^2}{2}} e^{-\frac{x^3}{6 \sigma} + O \left(
      \frac{x^4}{\sigma^2} \right)} \\
    & = e^{-\frac{x^2}{2}} \left( 1 - \frac{x^3}{6 \sigma} + O \left(
      \frac{x^4}{\sigma^2} \right) \right) + e^{-\frac{x^2}{2}} R \left(
      -\frac{x^3}{6 \sigma} + O \left( \frac{x^ 4}{\sigma^2} \right) \right)
      \ .
    \end{split}
  \end{align*}
  The last term can be estimated by
  \[
    \left| e^{-\frac{x^2}{2}} R \left( -\frac{x^3}{6 \sigma} + O \left(
      \frac{x^4}{\sigma^2} \right) \right) \right|
    \leq \left| \left( \frac{1}{72 \sigma^2} + O \left( \frac{1}{\sigma^4} \right) \right)
      x^6 e^{\left( \left| \frac{x}{6 \sigma} + O \left( \frac{x^2}{\sigma ^2}
      \right) \right| -\frac{1}{2} \right) x^2} \right| 
      \ .
  \]
  Here the assumption $x = o(\sigma)$ plays a role, bounding the product of the
  last factors as $\sigma$ grows, so in fact a stronger estimate holds:
  \[
    e^{-\frac{x^2}{2}} R \left( \frac{x^3}{6 \sigma} + O \left( \frac{x^4}{
      \sigma^2} \right) \right) =
    O \left( \frac{1}{\sigma^2} \right).
  \]
  Since $x^4 e^{-\frac{x^2}{2}}$ is also bounded, regardless of any growth
  assumptions on $x$, we also have
  \begin{equation}\label{eq:expx2kill}
    e^{-\frac{x^2}{2}} \left( O \left( \frac{x^4}{\sigma^2} \right) \right) = O
    \left( \frac{1}{\sigma^2} \right)
    \ ,
  \end{equation}
  which proves the claim.
\end{proof}

Putting all of our estimates together, we obtain a good approximation for $b$
as $\sigma$ tends to infinity.
First, 
\begin{align*}
  \begin{split}
    \sqrt{\frac{N}{m n}} e^{\theta}
    & = \frac{1}{\sigma}\,\left(1 + \frac{x}{2 \sigma} + O \left( \frac{x^2}{\sigma^2} \right)
      \right) \left( 1 + O \left( \frac{1}{\sigma^2} \right) \right) \\
    & = \frac{1}{\sigma} + \frac{x}{2 \sigma^2} + O \left( \frac{x^2}{\sigma^3} \right) 
      \ .
  \end{split}
\end{align*}
Now consider the product
\begin{align}\label{eq:prod}
  \begin{split}
    \sqrt{2 \pi}\, b(\sigma,x) & = \left( \frac{1}{\sigma} + \frac{x}{2 \sigma^2} + O \left( \frac{x^2}{\sigma^3} \right) \right) e^{-(A + B)} \\
    & = e^{-\frac{x^2}{2}} \left( \frac{1}{\sigma} + \frac{x}{2 \sigma^2} + O \left( \frac{x^2}{\sigma^3} \right) \right)  \left( 1 - \frac{x^3}{6 \sigma} +
      O \left( \frac{1}{\sigma^2} \right) \right) 
      \ ;
  \end{split}
\end{align}
its main contribution is given by
\[
  e^{-\frac{x^2}{2}} \left( \frac{1}{\sigma} + \frac{x}{2 \sigma^2} \right) \left( 1 -
    \frac{x^3}{6 \sigma} \right) =  e^{-\frac{x^2}{2}} \left(
    \frac{1}{\sigma}-{\frac {x \left( {x}^{2}-3 \right) }{6\,{\sigma}^{2}}}-{
      \frac {{x}^{4}}{12\,{\sigma}^{3}}}
  \right) 
  \ .
\]
As in the derivation of Equation~\eqref{eq:expx2kill}, the final term in this
sum is $ O \left( {1}/{\sigma^3} \right)$. The same technique shows that the
other cross-terms in the product of Equation~\eqref{eq:prod} are $O \left(
{1}/{\sigma^3} \right)$, which concludes the proof of
Proposition~\ref{pro:lb}.

\bibliographystyle{abbrv}

\bibliography{heuristic-g3}

\end{document}